\newcommand{\R}{{\Bbb R}}
\newcommand{\Z}{{\Bbb Z}}
\newcommand{\C}{{\Bbb C}}
\newtheorem{thm}{Theorem}
\newtheorem{lemma}[thm]{Lemma}
\newtheorem{corollary}[thm]{Corollary}
\newtheorem{proposition}[thm]{Proposition}
\newtheorem{remark}[thm]{Remark}
\newproof{proof}{Proof}
\begin{document}

\begin{frontmatter}

\title{Stability of semi-wavefronts for delayed reaction-diffusion equations}

\author[a]{Abraham Solar}
\address[a]{Instituto de F\'isica, Pontificia Universidad Cat\'olica de Chile, Casilla 306, Santiago, Chile
\\ {\rm E-mail: asolar@fis.uc.cl}}

\begin{abstract} This paper deals with the asymptotic behavior of solutions to the delayed
monostable  equation: $(*)$ $u_{t}(t,x) = u_{xx}(t,x) - u(t,x) + g(u(t-h,x)),$ $x \in \mathbb{R},\ t >0,$ where $h>0$ and the reaction term $g: \mathbb{R}_+ \to \mathbb{R}_+$ has exactly two fixed points (zero  and $\kappa >0$).  Under certain condition on the derivative of $g$ at $\kappa$, the global stability of fast wavefronts is proved. Also, the stability of the $leading \ edge$ of semi-wavefronts for $(*)$ with $g$ satisfying $g(u)\leq g'(0)u, u\in\R_+,$ is established.\end{abstract}

\begin{keyword}semi-wavefront\sep
stability \sep birth function\sep leading edge \sep  uniqueness 
\MSC[2010] 34K12\sep 35K57\sep 92D25
\end{keyword}

\end{frontmatter}


\section{Main results and discussion}

In this work, the main object of studies is:  
\begin{eqnarray}\label{ie}
u_{t}(t,x) &=& u_{xx}(t,x) - u(t,x) + g(u(t-h,x)),\quad x \in \R,\ t >0,
\end{eqnarray}
where $h>0$ and the birth function $g:\R_+\rightarrow\R_+$ is Lipschitz continuous with constant $L_g$ and has only two fix points, 0 and $\kappa>0$ .
\vspace{1.5mm}

It is well known that one of key elements determining the dynamics in (\ref{ie}) is  its semi-wavefront solutions. By definition, these are bounded positive solutions of (\ref{ie}) in the form $u(t,x)=\psi_c(x+ct)$, where $c>0$ is called the speed of propagation and the wave profile $\psi_c:\R\rightarrow\R$ satisfies   $\psi_c(-\infty)=0$. When $\psi_c(+\infty)=\kappa$ we say that $\psi_c$ is a wavefront.

 The studies of stability of wavefronts for delayed monostable models were initialed by Schaaf  \cite{sch}  who considered a quite general scalar reaction-diffusion equation. In particular,  Schaaf established that the localization of the spectrum of the linearization of (\ref{ie}) along $u(t,x)=\psi(x+ct)$ depends continuously on delay. It should be noted that one of the main  difficulties appearing in the study of wavefronts for delayed equations is that the  associated semi-flow is not monotone in general. In order to deal with this obstacle, a $quasimonotonicity$ condition is usually assumed:   it requires from the nonlinearity  monotonicity  in the delayed argument (as it was done in  \cite{sch}). After the seminal work by Schaaf, a series of other studies  appeared, in some of them the $subtangency$ condition (i.e., the inequality $g(u)\leq g'(0)u$, $u\in\R_+$) was instrumental for the stability analysis  (in any case, a number of applied models satisfies this condition).  Among these studies, we would like to distinguish the important contribution \cite{MOZ}  by Mei {\it et al}  
where, assuming  the quasi-monotonicity condition, the authors proved  the global stability of monotone wavefronts (minimal as well as non minimal ones, see \cite[Theorem 2.2]{MOZ}). Then, we can to mention a work of Sh-L Wu {\it et al} \cite{WZL} where for some type of subtangential non monotone $g$ ('crossing monostable' case), the local stability of fast wavefronts is obtained; it is worth mentioning that in this case, wavefronts, in general, are no longer monotone (e.g., see \cite{TTT} and \cite{FGT}). Still in subtangential  case, Chi-Kun Lin $et\ al$ \cite{LLLM} prove, for unimodal $g$(this is, supposing  $g$ has a single local maximum), the local stability of non minimal wavefronts(whether monotone or non monotone).  Next, under similar conditions and $g$ satisfying $|g'(u)|<1$ in some neighborhood of $\kappa$, Solar and Trofimchuk have established the global stability of non minimal wavefronts (whether monotone or non monotone)\cite[Corollary 3]{STR1}. Using that $|g'(\kappa)|<1$, Chern  $et\ al$ \cite[Theorem 2.3]{CMYZ}   have demonstrated the local stability of critical wavefronts (whether monotone or non monotone) for a class of initial data sufficiently flat at $\infty$.

 Now, equations with non subtangential nonlinearities have recently attracted a lot of interest because of their connection to the so-called  Allee Effect in the population dynamics \cite{BGHR,BB,Den,RGHK}, which means that the per capita growth rate $g(u)/u$ for low densities $u$ is also low. In this direction, by assuming quasi-monotonicity condition, it has been possible to establish the stability of pushed wavefronts (global in a certain sense) \cite[Theorem 1.5]{STR} as well as that of non minimal wavefronts \cite[Theorem 1]{STR1}. 
 
Hence, in the above mentioned works, we can find  the stability results  for equation (\ref{ie}) only when $g$ meets either monotonicity or subtangency condition. It is important to mention that without assuming any of these two conditions, even the problem of existence of wavefronts is not completely solved (e.g.,  see \cite[Corollary 4]{TTT}: of course,  in the available literature there are some partial results on the existence  of semi-wavefronts for certain subclasses of equations, e.g., see \cite[Theorem 2.4]{TT}). 
In the case of non minimal wavefronts, one of the main assertions of this paper(Corollary \ref{gs})  extends stability results  from  the aforementioned works. Actually,   without assuming monotonicity or subtangency property of $g$, we prove the global stability of wavefronts with a speed greater than certain speed $c(L_g)$. This work  is also one of the first studies on the stability of proper semi-wavefronts. In the case when $c(L_g )$ coincides with the minimum speed, the stability of {\it leading edge} all semi-wavefronts is obtained. We also present a new result concerning  the uniqueness of semi-wavefronts: Corollary  \ref{uni}  below complements Theorem 7 of \cite{AG}   proved for local reaction-diffusion equation (\ref{ie}).    

\noindent In order to obtain these stability results we  study the decay of solutions of the constant coefficient linear equation with delay, 

\begin{equation}\label{lde}
u_t(t,x)=u_{xx}(t,x)+mu_x(t,x)+pu(t,x)+qu(t-h,x+d) \quad x\in\R, t>0,
\end{equation}
where the parameters $m, p, q$ y $d$ are real numbers.
 
For an initial datum $u_0\in C([-h,0], L^{1}(\R))$, let us denote $C_{u_0}:=\sup_{s\in[-h,0]}||u(s,\cdot)||_{L^1}$.
\begin{thm}\label{lee}  
Suppose that $-p\geq q\geq 0$. Let $\gamma$ be the only real and non-positive solution of the following equation:

\begin{eqnarray}\label{P}
\gamma-p= qe^{-h\gamma}.
\end{eqnarray}

\noindent If the initial datum $u_0$ belong to $C ([-h,0]; L^1(\R))$ then the solution $u(t,x)$ of (\ref{lde}) satisfies the estimate:
\begin{equation}\label{deca}
\sup_{x\in\R}|u(t,x)| <A_0 \frac{e^{\gamma t}}{\sqrt{t}}	\quad\quad \hbox{for all} \quad t>\frac{h}{2},
\end{equation}
where $A_0=C_{u_0}/2\sqrt{1+h(\gamma-p)}$,
\end{thm}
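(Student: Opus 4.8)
The plan is to represent the solution through the fundamental solution (Green's function) $\Gamma(t,x)$ of (\ref{lde}) and to reduce the $L^\infty$ estimate of $u$ to one for $\Gamma$. Since the data are controlled only in $L^1$, I would lean on the convolution bound $\sup_x|(\Gamma(t,\cdot)*f)(x)|\le\|\Gamma(t,\cdot)\|_{L^\infty}\|f\|_{L^1}$, so that, together with the delayed (history) terms, everything is governed by $\|\Gamma(t,\cdot)\|_{L^\infty}$ and by $\|u_0(\theta,\cdot)\|_{L^1}\le C_{u_0}$. Taking the Fourier transform in $x$ turns (\ref{lde}) into the one-parameter family of scalar delay equations $\dot X=a(\xi)X+b(\xi)X(t-h)$ with $a(\xi)=p-\xi^2+im\xi$ and $b(\xi)=qe^{i\xi d}$, so that $\mathrm{Re}\,a(\xi)=p-\xi^2$ and $|b(\xi)|=q$; its fundamental solution $X(t,\xi)=\hat\Gamma(t,\xi)$ has Laplace transform $1/D(\lambda,\xi)$ with $D(\lambda,\xi)=\lambda-a(\xi)-b(\xi)e^{-h\lambda}$.

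The next step is the spectral analysis of $D$. For each $\xi$ the real function $\lambda\mapsto\lambda-(p-\xi^2)-qe^{-h\lambda}$ is strictly increasing, hence $D$ has a unique real root $\gamma(\xi)$; the hypothesis $-p\ge q\ge0$ is exactly what forces $\gamma(0)=\gamma\le0$, while $\gamma(\xi)$ is convex and strictly decreasing in $\xi^2$. Differentiating $\gamma(\xi)-(p-\xi^2)=qe^{-h\gamma(\xi)}$ in $s=\xi^2$ gives $\gamma'(s)=-[\,1+hqe^{-h\gamma(\xi)}\,]^{-1}$, whence $\gamma(\xi)=\gamma-\xi^2/(1+h(\gamma-p))+o(\xi^2)$ near $0$: the curvature coefficient $1+h(\gamma-p)=D_\lambda(\gamma,0)$ is the derivative of the characteristic function at the dominant root and is precisely the quantity surfacing in $A_0$. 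At the other end, solving the root relation as $\xi\to\infty$ gives the logarithmic decay $\gamma(\xi)\sim-(2/h)\ln|\xi|$, which will be responsible for the threshold.

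I would then write $X$ as a Bromwich integral, shift the contour to $\mathrm{Re}\,\lambda=\gamma(\xi)$, and pick up the residue at the simple root $\gamma(\xi)$, giving the leading term $e^{\gamma(\xi)t}/D_\lambda(\gamma(\xi),\xi)$ with $D_\lambda(\gamma(\xi),\xi)=1+hb(\xi)e^{-h\gamma(\xi)}$. Inserting this into $\|\Gamma(t,\cdot)\|_{L^\infty}\le\frac{1}{2\pi}\int_\R|X(t,\xi)|\,d\xi$ and substituting $\mu=\gamma(\xi)$, under which $\xi^2=\phi(\mu):=p+qe^{-h\mu}-\mu$, $\phi(\gamma)=0$ and $\phi'(\gamma)=-(1+h(\gamma-p))$, reduces the leading contribution to $\tfrac{1}{2\pi}\int_{-\infty}^{\gamma}e^{\mu t}/\sqrt{\phi(\mu)}\,d\mu$ up to factors that are $\approx1$ near $\mu=\gamma$. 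Since $\sqrt{\phi(\mu)}\sim\sqrt{(1+h(\gamma-p))(\gamma-\mu)}$ there, a Watson-type evaluation of this endpoint delivers both the factor $e^{\gamma t}/\sqrt t$ and the power $1+h(\gamma-p)$ in the denominator of the constant; and because $\gamma(\xi)\sim-(2/h)\ln|\xi|$ the underlying $\xi$-integral has a large-$|\xi|$ tail comparable to $\int|\xi|^{-2t/h}\,d\xi$, which converges exactly when $t>h/2$. The residual $1/\sqrt\pi$ in this evaluation leaves room below $A_0\,e^{\gamma t}/\sqrt t$ and accounts for the strict inequality in (\ref{deca}).

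The main obstacle is the uniform (in $\xi$) control of everything that is not the dominant residue, so that the crude comparison estimate and the sharp residue estimate can be combined into the single clean bound with the stated constant. Concretely, one must show that all other (complex) roots of $D(\cdot,\xi)$ lie strictly to the left of $\gamma(\xi)$ and that the shifted-contour remainder, together with the contributions of the history $u_0|_{[-h,0]}$, is dominated uniformly for all $\xi\in\R$; here the coercivity $\mathrm{Re}\,a(\xi)=p-\xi^2\to-\infty$ furnishes both the integrability at large $|\xi|$ and the spectral gap. The delicate point is reconciling the two competing demands simultaneously: the precise constant $A_0=C_{u_0}/(2\sqrt{1+h(\gamma-p)})$, which is dictated by the endpoint $\xi=0$ through the curvature $1+h(\gamma-p)$, and the threshold $t>h/2$, which is dictated by the large-$\xi$ behavior of $\gamma(\xi)$. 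Once these uniform estimates are in place, the endpoint evaluation is routine and yields (\ref{deca}).
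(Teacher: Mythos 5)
Your overall skeleton coincides with the paper's: Fourier transform in $x$, reduction to the one-parameter family of scalar delay equations $\hat u_t(t,\zeta)=\sigma(\zeta)\hat u(t,\zeta)+k(\zeta)\hat u(t-h,\zeta)$, and an inversion integral whose small-$\zeta$ behavior (quadratic, with curvature $1/(1+h(\gamma-p))$) produces the constant and whose large-$\zeta$ behavior ($\lambda(\zeta)\sim-\tfrac{2}{h}\log\zeta$) produces the threshold $t>h/2$; this is precisely the content of Lemma \ref{loge}. The gap is in how you pass from the characteristic equation to the pointwise bound $|\hat u(t,\zeta)|\le C_{u_0}e^{\lambda(\zeta)t}$. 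You propose a Bromwich/residue computation at ``the unique real root $\gamma(\xi)$ of $D(\cdot,\xi)$'', but the increasing function you exhibit, $\lambda\mapsto\lambda-(p-\xi^2)-qe^{-h\lambda}$, is $\lambda-\mathrm{Re}\,a(\xi)-|b(\xi)|e^{-h\lambda}$, not $D(\lambda,\xi)=\lambda-a(\xi)-b(\xi)e^{-h\lambda}$ restricted to the real axis: since $a(\xi)=p-\xi^2+im\xi$ and $b(\xi)=qe^{i\xi d}$ are genuinely complex when $m\xi\neq 0$ or $\sin(\xi d)\neq 0$, one has $\mathrm{Im}\,D(\gamma(\xi),\xi)=-m\xi-q\sin(\xi d)e^{-h\gamma(\xi)}\neq 0$ in general, so $\gamma(\xi)$ is not a pole of $1/D(\cdot,\xi)$ and the residue $e^{\gamma(\xi)t}/D_\lambda(\gamma(\xi),\xi)$ you want to pick up does not exist. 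What is true is only the one-sided statement that every root of $D(\cdot,\xi)$ has real part $\le\gamma(\xi)$. On top of this, the two ingredients you yourself flag as ``the main obstacle'' --- a uniform-in-$\xi$ gap to the rest of the (infinite) spectrum and uniform control of the shifted-contour remainder and of the history terms --- are exactly the hard part of any residue proof and are not supplied.

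The paper sidesteps the spectral decomposition entirely with the elementary Halanay-type inequality (Lemma \ref{halanay}): rewriting the scalar delay equation in integral form and comparing $|\hat u(t,\zeta)|$ with the exact exponential solution $Ae^{\lambda(\zeta)t}$ of the majorant \emph{real} equation $\lambda=\mathrm{Re}\,\sigma+|k|e^{-\lambda h}$ yields $|\hat u(t,\zeta)|\le C_{u_0}e^{\lambda(\zeta)t}$ directly, for complex $\sigma$ and $k$, with no root-dominance, contour shifting, or uniformity issues; Lemma \ref{loge} then gives $e^{\lambda(\zeta)t}\le e^{\gamma t}(1+h\epsilon_h\zeta^2)^{-t/h}$ and Bernoulli's inequality finishes the computation. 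To salvage your route you would have to either restrict to $m=d=0$, where your $\gamma(\xi)$ really is the dominant root (this is essentially what Theorem \ref{lee1} exploits to get the sharp asymptotics), or replace the residue step by a genuine comparison argument of Halanay type; as written, the proof does not go through.
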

We note that in the critical case $p  = q$ (which implies $\gamma = 0$) an exponential estimate is no longer available. In some cases, it can be established that the decay is not faster than that given by (\ref{deca}).   For instance, if $d = 0$ for the evolution equation (\ref{lde}), the behavior of the solutions in the $L^1(\R)$ phase space with an appropriate weight can be specified. In fact, we obtain the exact behavior which is embodied in Theorem below 

\begin{thm}[Asymptotic behavior]\label{lee1}

Let us consider (\ref{lde}) with $d = 0$.  Let $u(t, x)$  the solution generated by the initial data $u(s,\cdot)=e^{\sigma s}u_0$  where $u_0$  is such that $e^{\frac{m}{2}\cdot}u_0\in L^1(\R)$ and  $\sigma$  is the only real solution of 

\begin{eqnarray}
qe^{-\sigma h}=\sigma+\frac{m^2}{4}-p, 
\end{eqnarray}
then
\begin{eqnarray}
\lim_{t\rightarrow\infty}\sqrt{t}e^{-\sigma t}u(t,x+o(\sqrt{t}))=\frac{\sqrt{1+hqe^{-\sigma h}}}{2\sqrt{\pi}}e^{-\frac{m}{2}x}\int_{\R}e^{\frac{m}{2}y}u_0(y)dy,
\end{eqnarray}
for all $x\in\R$. 
\end{thm}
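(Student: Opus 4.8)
My plan is to strip off the drift, the exponential growth, and the zeroth-order terms by an explicit change of variables, reducing (\ref{lde}) to a pure \emph{delayed heat equation}, and then to read off the exact asymptotics from the dominant root of the associated characteristic function via the Fourier transform in $x$. Concretely, I would set $u(t,x)=e^{\sigma t-\frac{m}{2}x}\phi(t,x)$. A direct computation, in which the defining identity $qe^{-\sigma h}=\sigma+\frac{m^2}{4}-p$ for $\sigma$ is used precisely to cancel all remaining constant-coefficient terms, shows that $\phi$ solves
\begin{equation*}
\phi_t(t,x)=\phi_{xx}(t,x)+Q\bigl(\phi(t-h,x)-\phi(t,x)\bigr),\qquad Q:=qe^{-\sigma h}\ge 0,
\end{equation*}
with the \emph{time-independent} history $\phi(s,\cdot)=u_0^{*}:=e^{\frac{m}{2}\cdot}u_0\in L^1(\R)$ for $s\in[-h,0]$. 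Since the prefactor $e^{\sigma t-\frac{m}{2}x}$ is explicit, the statement reduces to proving
\begin{equation*}
\lim_{t\to\infty}\sqrt{t}\,\phi(t,x+o(\sqrt t))=\frac{\sqrt{1+hQ}}{2\sqrt\pi}\int_{\R}u_0^{*}(y)\,dy,
\end{equation*}
after which the stated limit for $u$ follows by multiplying back by $e^{\sigma t-\frac{m}{2}x}$ and using that the leading profile of $\phi$ is flat on the diffusive scale $\sqrt t$.

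The analytic core is the $\phi$-equation, which I would treat by the Fourier transform $\hat\phi(t,\xi)=\int_\R e^{-i\xi x}\phi(t,x)\,dx$. For each fixed $\xi$ this gives the scalar linear delay equation $\partial_t\hat\phi=-(\xi^2+Q)\hat\phi+Q\hat\phi(t-h,\cdot)$ with constant history $\widehat{u_0^{*}}(\xi)$. Taking the Laplace transform in $t$ yields
\begin{equation*}
\widehat{u_0^{*}}(\xi)^{-1}\,\mathcal L[\hat\phi](\lambda,\xi)=\frac{1+Q\,\lambda^{-1}(1-e^{-\lambda h})}{\Delta(\lambda,\xi)},\qquad \Delta(\lambda,\xi):=\lambda+\xi^2+Q-Qe^{-\lambda h},
\end{equation*}
so the large-time behaviour is governed by the rightmost zeros of $\Delta(\cdot,\xi)$. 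At $\xi=0$ one checks that $\lambda=0$ is a simple zero and, because $Q\ge0$, it is dominant: every other zero has strictly negative real part, since $\mathrm{Re}\,\lambda\ge0$ would force $|\lambda+Q|\ge Q\ge Qe^{-h\,\mathrm{Re}\,\lambda}$ with equality only at $\lambda=0$. By the implicit function theorem there is an analytic branch $\lambda(\xi)$ with $\lambda(0)=0$, and differentiating $\Delta(\lambda(\xi),\xi)\equiv0$ twice at $\xi=0$ (using $\partial_\lambda\Delta(0,0)=1+hQ$) gives $\lambda'(0)=0$ and
\begin{equation*}
\lambda(\xi)=-\frac{\xi^2}{1+hQ}+O(\xi^4)=-D\xi^2+O(\xi^4),\qquad D:=\frac{1}{1+hQ}.
\end{equation*}

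I would then invert the Fourier transform and isolate the dominant mode. Writing $\hat\phi(t,\xi)=R(\xi)\,\widehat{u_0^{*}}(\xi)\,e^{\lambda(\xi)t}+\rho(t,\xi)$, where $R(\xi)=\frac{1+Q\lambda(\xi)^{-1}(1-e^{-\lambda(\xi)h})}{1+hQe^{-\lambda(\xi)h}}$ is the residue coefficient at the dominant root and $\rho$ collects the subdominant residues (of type $e^{\lambda_k(\xi)t}$ with $\mathrm{Re}\,\lambda_k\le-\delta<0$ for small $\xi$), the inversion formula together with the rescaling $\xi=\eta/\sqrt t$ gives
\begin{equation*}
\sqrt t\,\phi(t,x)=\frac{1}{2\pi}\int_\R e^{i\eta x/\sqrt t}\,R(\eta/\sqrt t)\,\widehat{u_0^{*}}(\eta/\sqrt t)\,e^{t\lambda(\eta/\sqrt t)}\,d\eta+o(1).
\end{equation*}
The integrand converges pointwise to $R(0)\,\widehat{u_0^{*}}(0)\,e^{-D\eta^2}$, and the \emph{arithmetic miracle} is that $R(0)=1$, because both its numerator and denominator tend to $1+hQ$. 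Dominated convergence (with the tails and the subdominant part handled by the decay estimate of Theorem \ref{lee} and the spectral gap) then produces
\begin{equation*}
\lim_{t\to\infty}\sqrt t\,\phi(t,x)=\frac{\widehat{u_0^{*}}(0)}{2\pi}\int_\R e^{-D\eta^2}\,d\eta=\frac{1}{2\sqrt{\pi D}}\int_\R u_0^{*}=\frac{\sqrt{1+hQ}}{2\sqrt\pi}\int_\R u_0^{*},
\end{equation*}
uniformly for $x$ in the diffusive range $o(\sqrt t)$; undoing the substitution recovers the claim for $u$.

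The main obstacle is the rigorous spectral decomposition of the scalar delay equation and the uniform control of its non-dominant spectrum. Unlike the heat equation, $\Delta(\cdot,\xi)$ has infinitely many zeros, so I must (i) justify the Laplace-inversion/residue expansion by shifting the Bromwich contour past $\lambda(\xi)$ and bounding the remainder, (ii) establish a \emph{uniform} spectral gap, i.e.\ that for $|\xi|\le\xi_0$ the branch $\lambda(\xi)$ is real, simple and rightmost while all other zeros satisfy $\mathrm{Re}\,\lambda\le-\delta$, and (iii) dominate the large-$|\xi|$ part of the inversion integral, where I expect to invoke Theorem \ref{lee}. Pinning down the constant exactly---both the curvature $\lambda''(0)=-2/(1+hQ)$ and the normalization $R(0)=1$---is the delicate bookkeeping; it is precisely the cancellation of the delay-generated factors $1+hQ$ in $R(0)$ against the effective diffusion $D=1/(1+hQ)$ that yields the advertised constant $\sqrt{1+hQ}/(2\sqrt\pi)$.
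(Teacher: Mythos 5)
Your outline is correct and lands on the right constant, but the engine you use to extract the large-time behaviour of each Fourier mode is genuinely different from the paper's. Both arguments share the same skeleton: remove the drift with the factor $e^{\frac{m}{2}x}$, Fourier-transform in $x$ to get the scalar delay equation $\partial_t\hat v=(-\xi^2+p-\tfrac{m^2}{4})\hat v+q\hat v(t-h,\cdot)$, identify the real characteristic root $\lambda(\xi)$ with $\lambda(0)=\sigma$ and curvature $-2/(1+hqe^{-\sigma h})$, rescale $\xi=\eta/\sqrt t$, and finish with a Gaussian integral. Where you invoke the Laplace transform, a residue expansion at the rightmost zero of $\Delta(\cdot,\xi)$, a uniform spectral gap, and the normalization $R(0)=1$, the paper instead exploits $q\ge 0$ to run a comparison principle mode by mode: the real and imaginary parts of $\hat v(t,\xi)$ are squeezed between $m_i(\xi)e^{\lambda(\xi)t}$ and $M_i(\xi)e^{\lambda(\xi)t}$, where $m_i,M_i$ are the extrema over $s\in[-h,0]$ of $\hat v(s,\xi)e^{-\lambda(\xi)s}$. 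The special history $u(s,\cdot)=e^{\sigma s}u_0$ is precisely what makes this pinching tight: since $\lambda(\eta/\sqrt t)\to\sigma$, both $m_1$ and $M_1$ converge to $\int_\R e^{\frac{m}{2}y}u_0(y)\,dy$ and $m_2,M_2\to 0$, so no Bromwich contour shifting, no control of the infinitely many subdominant characteristic roots, and no residue bookkeeping are needed --- exactly the three obstacles you flag as the hard part of your route. Your approach is more robust (it would handle a general $L^1$-valued history, with the constant replaced by the corresponding residue functional), but as written it leaves the rigorous spectral decomposition, uniformly in $\xi$, as an acknowledged gap; if you want a short self-contained proof, the paper's Halanay-type sandwich (Lemma \ref{halanay} combined with the two-sided bounds of Lemma \ref{loge}, which also furnish the dominating function for Lebesgue's theorem and kill the region $|\eta|\gtrsim\sqrt t$) replaces all of that machinery.
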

Now, for the study of the stability of semi-wavefronts with speed $c$ , the following equation should be considered

\begin{eqnarray}\label{nle2}
v_t(t,z)=v_{zz}(t,z)-cv_z(t,z)-v(t,z)+g(v(t-h,z-ch)) \quad t>0, z\in\R.
\end{eqnarray}

\vspace{2mm}

\noindent For  fixed $L > 1$ and a parameter $c$, let us define the real function $E_c(\lambda)=-\lambda^{2}+c\lambda+1-Le^{-\lambda ch}$ and 

$$
c(L)=\inf \{c>0: E_c(\lambda_0)\geq 0\ \hbox{for some}\ \lambda_0\in\R \}.
$$

Let us denote by $\lambda_1(c)\leq\lambda_2(c)$ the two solutions of $E_c(\lambda)=0$ for $c\geq c(L)$. For $c\geq c(L_g)$ let us fix  $\lambda_c\in[\lambda_1(c),\lambda_2(c)]$ and let us denote by $\xi_c(z):=e^{-\lambda_c z}$.  Now, the first main result of this article can be set out.

\begin{thm}[Stability with weight]\label{st}
Let $c\geq c(L_g)$.  Let $v_0(s, z)$ and $\psi_0 (s, z)$ be two initials data to (\ref{nle2}) such that
 \begin{eqnarray}\label{ic}
 u_0(s,z):=\xi_c (z)|v_0(s,z)-\psi_0(s,z)|\in C([-h,0], L^{1}(\R)) 
 \end{eqnarray}
 Assume $u(t, z)$  satisfies (\ref{lde})  with initial data $u_0(s, z)$ and with parameters  $m=m(\lambda_c)=2\lambda_c -c$, $p=p(\lambda_c)=\lambda_c^{2}-c\lambda_c-1$, $q=q(\lambda_c)=L_ge^{-\lambda_c ch}$ and $d=-ch$.
  Then:
\begin{eqnarray}\label{rc}
 \xi_c(z)|v(t,z)-\psi(t,z)|\leq u(t,z)\quad \hbox{for all}\quad t\geq-h, z\in\R,
\end{eqnarray}
in particular
\begin{eqnarray}\label{rc1}
|v(t,z)-\psi(t,z)|\leq A_0 \xi_c(-z)\frac{e^{\gamma t}}{\sqrt{t}}\quad \hbox{for all}\quad t>\frac{h}{2}, z\in\R
\end{eqnarray}
where $\gamma=\gamma(\lambda_c)$ is defined by $(\ref{P})$
\end{thm}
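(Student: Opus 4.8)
The plan is to pass to the weighted variable and thereby reduce the nonlinear comparison to the linear decay already established in Theorem~\ref{lee}. Writing $\tilde v=\xi_c v$ and $\tilde\psi=\xi_c\psi$ with $\xi_c(z)=e^{-\lambda_c z}$, a direct substitution into (\ref{nle2}) shows that $\tilde v$ solves $\tilde v_t=\tilde v_{zz}+m\tilde v_z+p\tilde v+\xi_c(z)g(v(t-h,z-ch))$ with precisely $m=2\lambda_c-c$ and $p=\lambda_c^2-c\lambda_c-1$, and likewise for $\tilde\psi$. Subtracting and setting $\tilde w:=\tilde v-\tilde\psi=\xi_c(v-\psi)$ gives $\tilde w_t=\tilde w_{zz}+m\tilde w_z+p\tilde w+N(t,z)$, where $N(t,z)=\xi_c(z)\bigl[g(v(t-h,z-ch))-g(\psi(t-h,z-ch))\bigr]$. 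Using the Lipschitz bound $|g(a)-g(b)|\le L_g|a-b|$ and the identity $\xi_c(z)=e^{-\lambda_c ch}\,\xi_c(z-ch)$, the source satisfies $|N(t,z)|\le L_g e^{-\lambda_c ch}\,|\tilde w(t-h,z-ch)|=q\,|\tilde w(t-h,z-ch)|$ with $q=L_g e^{-\lambda_c ch}$. Since $u_0(s,z)=|\tilde w(s,z)|$ for $s\in[-h,0]$, the functions $u$ and $|\tilde w|$ start from the same datum.

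The heart of the argument is then to prove $|\tilde w(t,z)|\le u(t,z)$ for all $t\ge-h$, which is exactly (\ref{rc}). I would proceed by the method of steps. On $[-h,0]$ equality holds. Assuming $|\tilde w(s,\cdot)|\le u(s,\cdot)$ for $s\le nh$, consider on the strip $[nh,(n+1)h]\times\R$ the two functions $P=u-\tilde w$ and $M=u+\tilde w$. Subtracting the equation for $\tilde w$ from the linear equation (\ref{lde}) satisfied by $u$, and using $\mp N\ge-|N|\ge-q\,|\tilde w(t-h,z-ch)|$ together with the induction hypothesis $|\tilde w(t-h,\cdot)|\le u(t-h,\cdot)$ (legitimate because $t-h\le nh$), one finds that both $P$ and $M$ are supersolutions of the linear operator $\mathcal{L}\phi:=\phi_t-\phi_{zz}-m\phi_z-p\phi$, i.e. $\mathcal{L}P\ge0$ and $\mathcal{L}M\ge0$, with nonnegative data at $t=nh$ (again by the hypothesis). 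A maximum principle for $\mathcal{L}$ then gives $P,M\ge0$, that is $|\tilde w|\le u$ on $[nh,(n+1)h]$, closing the induction and proving (\ref{rc}).

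With (\ref{rc}) in hand, the estimate (\ref{rc1}) follows by applying Theorem~\ref{lee} to $u$, whose hypothesis $-p\ge q\ge0$ holds for the chosen parameters: indeed $q=L_g e^{-\lambda_c ch}\ge0$, and since $\lambda_c\in[\lambda_1(c),\lambda_2(c)]$ we have $E_c(\lambda_c)\ge0$, i.e. $1+c\lambda_c-\lambda_c^2\ge L_g e^{-\lambda_c ch}$, which is precisely $-p\ge q$. Theorem~\ref{lee} then yields $\sup_x|u(t,x)|<A_0 e^{\gamma t}/\sqrt{t}$ for $t>h/2$, with $\gamma=\gamma(\lambda_c)$ the solution of (\ref{P}). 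Since $|\tilde w|\le u$ forces $u\ge0$, combining this with (\ref{rc}) and $\xi_c(z)^{-1}=\xi_c(-z)$ gives $|v(t,z)-\psi(t,z)|=\xi_c(-z)\,|\tilde w(t,z)|\le\xi_c(-z)\,u(t,z)<A_0\,\xi_c(-z)\,e^{\gamma t}/\sqrt{t}$, which is (\ref{rc1}).

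The main obstacle I anticipate is justifying the maximum principle on the full line $\R$. Because $\xi_c$ grows as $z\to-\infty$, the functions $P$ and $M$ need not be bounded, so the comparison cannot invoke the naive maximum principle; one must either restrict to, and justify, an admissible growth class (a Phragm\'en--Lindel\"of type condition) or else carry out the comparison on expanding domains $[-R,R]$ and control the lateral boundary contribution as $R\to\infty$, exploiting the a priori boundedness of the semi-wavefront solutions $v,\psi$ and the $L^1$-decay of $u$. Verifying this growth control, and confirming that the delayed coupling preserves the sign at every step of the induction, is where the genuine care lies; the substitution producing the transformed equation and the Lipschitz estimate on $N$ are routine.
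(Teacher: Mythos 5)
Your proposal is correct and follows essentially the same route as the paper: the weighted substitution $\tilde w=\xi_c(v-\psi)$, the Lipschitz bound on the delayed source giving $q=L_ge^{-\lambda_c ch}$, the step-by-step comparison of $\pm\tilde w-u$ against the linear supersolution via a Phragm\'en--Lindel\"of maximum principle (the paper invokes Protter--Weinberger exactly as you anticipate), and finally Theorem~\ref{lee} for the decay rate. The only cosmetic difference is that the paper works with $\delta_{\pm}=\pm\tilde w-u$ directly rather than your $P$ and $M$, which are the same functions up to sign.
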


\begin{corollary}[Uniqueness]\label{uni}
If $\phi_1(z)$ and $\phi_2(z)$ are two semi-wavefronts with speed  $c\geq c(L_g)$ satisfing (\ref{ic}), then $\phi_1=\phi_2$.

\end{corollary}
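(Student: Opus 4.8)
The plan is to realize each semi-wavefront as a \emph{stationary} solution of the moving-frame equation (\ref{nle2}) and then to feed the pair into the weighted stability estimate of Theorem \ref{st}. Concretely, if $\phi(z)$ is a semi-wavefront profile with speed $c$, then $u(t,x)=\phi(x+ct)$ solves (\ref{ie}); writing $z=x+ct$ and $v(t,z)=\phi(z)$ converts (\ref{ie}) into (\ref{nle2}), and since $\phi$ does not depend on $t$, the function $v(t,z)\equiv\phi(z)$ is a time-independent solution of (\ref{nle2}), its stationary profile equation being precisely $\phi''-c\phi'-\phi+g(\phi(\cdot-ch))=0$. Thus both hypotheses $\phi_1$ and $\phi_2$ generate solutions $v(t,z)\equiv\phi_1(z)$ and $\psi(t,z)\equiv\phi_2(z)$ of (\ref{nle2}) that are constant in $t$.

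Next I would apply Theorem \ref{st} with initial data $v_0(s,z)=\phi_1(z)$ and $\psi_0(s,z)=\phi_2(z)$. The compatibility hypothesis (\ref{ic}) is exactly the assumption of the corollary, namely $\xi_c(z)|\phi_1(z)-\phi_2(z)|\in C([-h,0],L^1(\R))$ (here automatically independent of $s$). Theorem \ref{st} then delivers the estimate (\ref{rc1}):
\begin{equation*}
|v(t,z)-\psi(t,z)|\leq A_0\,\xi_c(-z)\,\frac{e^{\gamma t}}{\sqrt{t}},\qquad t>\tfrac{h}{2},\ z\in\R,
\end{equation*}
where $\gamma=\gamma(\lambda_c)\leq 0$ is the non-positive root of (\ref{P}).

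Finally I would exploit that the two solutions are stationary: the left-hand side equals the $t$-independent quantity $|\phi_1(z)-\phi_2(z)|$. Fixing $z\in\R$ and letting $t\to\infty$, the factor $e^{\gamma t}/\sqrt{t}$ tends to $0$ because $\gamma\leq 0$ (even the borderline case $\gamma=0$ gives $t^{-1/2}\to 0$), while $A_0\,\xi_c(-z)$ remains a fixed constant. Hence $|\phi_1(z)-\phi_2(z)|=0$ for every $z\in\R$, that is, $\phi_1\equiv\phi_2$.

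The main point to justify carefully is the first step: that a bounded positive profile $\phi$ genuinely provides a solution of (\ref{nle2}) defined for all $t\geq -h$ and that, when it is used as constant-in-time initial data, the solution produced by the semiflow is $\phi$ itself. This is where well-posedness and uniqueness of the Cauchy problem for (\ref{nle2}) enter; once that is secured, the remainder is the routine limit argument above. I would also double-check that the parameters $m,p,q,d$ prescribed in Theorem \ref{st} satisfy the sign condition $-p\geq q\geq 0$ required by Theorem \ref{lee} for the chosen $\lambda_c\in[\lambda_1(c),\lambda_2(c)]$; this amounts to $E_c(\lambda_c)\geq 0$, which holds exactly because $\lambda_c$ lies between the two roots of $E_c$, but it must be confirmed so that the decay (\ref{rc1}) is legitimately available.
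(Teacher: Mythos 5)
Your argument is correct and is exactly the intended one: the paper gives no separate proof of Corollary \ref{uni} because it follows immediately from Theorem \ref{st} by viewing the two profiles as stationary solutions of (\ref{nle2}) and letting $t\to\infty$ in (\ref{rc1}), where $e^{\gamma t}/\sqrt{t}\to 0$ since $\gamma\leq 0$. Your side remarks (well-posedness of the moving-frame problem and the verification that $E_c(\lambda_c)\geq 0$ yields $-p\geq q\geq 0$) are the right things to check and both hold as you indicate.
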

\begin{remark}
In the inequality (\ref{rc}), instead of using the general functions $u(t,x)$ defined by Theorem \ref{st}, one could use the particular functions $u(t,x)=Be^{\gamma t+\lambda x}$, where  $B\in\R_+$ and $\gamma$ and $\lambda$ satisfy $$qe^{-\gamma h}e^{\lambda d}=-\lambda^2-m\lambda-p+\gamma,$$ however if ones does so one miss the important critical case, $c=c(L_g)$.

\end{remark}

\begin{remark}
When $\psi(t, z)$ is a wavefront, we have the stability of the wavefront on the sets $(-\infty, N ], N \in \R.$ This result is comparable to the one obtained by Uchiyama in \cite[ Theorem 4.1]{UC}.
\end{remark}
\vspace{2mm}

Theorem \ref{st} shows the stability of the $leading \ edge$ of semi-wavefronts with the correction given by the weight $\xi_c$ due to the instability of the equilibrium 0. It should be noted that this result does not take into account the stability of the positive equilibrium so that the semi-wavefronts, for example, could be asymptotically periodic at $\infty$ \cite[Theorem 3]{TTT}. The Corollary 1.4 essentially refers to the fact that semi-wavefronts are equal (up to translation) if they have the same asymptotic behavior in $-\infty$, i.e., the condition (\ref{ic}).
In the case when $L_g  = g'(0)$, under a rescaling of the variables, the
Theorem \ref{st} includes well-known models such as Nicholson's, described by,

 \begin{eqnarray} \label{nich} 
u_t(t,x)=u_{xx}(t,x)-\delta u(t,x)+pu(t-h,x)e^{-u(t-h,x)}\quad t>0, x\in\R,
\end{eqnarray}
or the Mackey-Glass model given by
\begin{eqnarray} \label{MG} 
u_t(t,x)=u_{xx}(t,x)-d u(t,x)+\frac{ab^n u(t-h,x)}{b^n+u^n(t-h,x)}\quad t>0, x\in\R.
\end{eqnarray}

In this case, we have the stability (and uniqueness  up  to  translation) of the leading edge of semi-wavefronts with speed c of (\ref{nich}) and (\ref{MG}) for all $c \geq c_*.$   Here,  $c_*$   is the minimum speed for which semi-wavefronts exist (see  \cite[Theorem 4.5]{TT} ).  The semi-wavefronts  of Theorem \ref{st} could show a type of $connective \ instability$ due to the positive equilibrium (e.g., see \cite{BS}); however, by controlling the size of the slope of $g$ at the positive equilibrium, the stability of the complete semi-wavefront can be obtained. For this, it is necessary to make some additional hypotheses, such as the following monostability condition

\vspace{2mm}

\noindent  {\rm \bf(M)}  
The function $g:\R_+\rightarrow\R_+ $ is such that the equation $g(x)= x$ has exactly two solutions: $0$ and
$\kappa>0$. Moreover, $g$  is $C^1$-smooth in some
$\delta_0$-neighborhood of the equilibria  where $g'(0) >1>g'(\kappa).$ In addition,
there are $C >0,\ \theta \in (0,1],$ such that   $
\left|g'(u)- g'(0)\right| +|g'(\kappa) - g'(\kappa-u)| \leq Cu^\theta $ for $u\in
(0,\delta_0].$
\vspace{4mm}

We note that for $g$ satisfying {\bf(M)}, there are real numbers $0 < \zeta_1\leq \zeta_2$
such that

\begin{itemize}
\item[(B1)] $g([\zeta_1,\zeta_2])\subset[\zeta_1,\zeta_2]$ and $g([0,\zeta_1])\subset [0,\zeta_2]$;
\item[(B2)] $\min_{\zeta\in[\zeta_1,\zeta_2]}g(\zeta)=g(\zeta_1)$;
\item[(B3)] $g(x)>x$ for $x\in[0,\zeta_1]$ and $1<g'(0)\leq g^*_+:=\sup_{s\geq 0}g(s)/s<\infty$;
\item[(B4)] In $[0,\zeta_2]$, the equation $g(x)=x$ has exactly two solutions 0 and $\kappa$.
\end{itemize}

Due to this and to \cite[Theorem 4.5]{TT} we obtain the following
\begin{proposition}[Existence of semi-wavefronts] \label{ew}
Let $g$ satisfy {\bf (M)}. 
Then, for each  $c > c^*_+:=  c(g^*_+)\geq c(L_g)$ equation  (\ref{ie})  has  semi-wavefronts 
with speed $c$.  Moreover, if  $0 < \zeta_1\leq\zeta_2$  meet (B1)-(B4), then, each   semi-wavefront $\phi_c$  satisfy:

\begin{eqnarray*}
\zeta_1\leq\phi_c(z)\leq\zeta_2\quad\forall z\in\R.
\end{eqnarray*}
\end{proposition}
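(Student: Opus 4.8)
The plan is to read Proposition \ref{ew} as a verification-and-invoke statement: I would show that a $g$ satisfying {\bf (M)} meets, together with the recorded properties (B1)--(B4), exactly the hypotheses of \cite[Theorem 4.5]{TT}, and then quote that theorem. First I would extract the structural consequences of {\bf (M)}: since $0$ and $\kappa$ are the only fixed points, $g(0)=0$ and $g(\kappa)=\kappa$; since $g$ is Lipschitz with constant $L_g$ and $g(0)=0$, one has $g(s)/s\le L_g$ for $s>0$, so the envelope constant $g^*_+=\sup_{s\ge 0}g(s)/s$ is finite with $1<g'(0)\le g^*_+\le L_g$ (this is (B3)); and the subtangency-with-slope-$g^*_+$ bound $g(u)\le g^*_+u$ for all $u\ge 0$ holds by the very definition of $g^*_+$. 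The existence of $0<\zeta_1\le\zeta_2$ realizing (B1)--(B4) is already recorded in the excerpt, so I may take it as given.

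For existence I would feed the pair (subtangency with slope $g^*_+$, invariant box (B1)--(B4)) into \cite[Theorem 4.5]{TT}. The role of the linear envelope is played by the constant $g^*_+>1$, which makes $E_c$ with $L=g^*_+$ attain nonnegative values once $c$ is large, so that $c^*_+:=c(g^*_+)$ is finite and well defined; \cite[Theorem 4.5]{TT} then yields, for every $c>c^*_+$, a semi-wavefront of speed $c$. The comparison between $c^*_+$ and $c(L_g)$ I would settle from $g^*_+\le L_g$ together with the monotonicity of $L\mapsto c(L)$, which locates the existence threshold $c^*_+$ relative to the stability threshold $c(L_g)$ of Theorem \ref{st}, so that on the common range existence and stability coexist.

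For the localization I would work from the profile equation $\phi''-c\phi'-\phi+g(\phi(\cdot-ch))=0$ and its bounded-solution representation $\phi=\mathcal{K}_c\,g(\phi(\cdot-ch))$, where $\mathcal{K}_c$ is convolution with the Green kernel of $D^2-cD-1$; the two characteristic roots satisfy $\mu_+\mu_-=-1$ and $\mu_+-\mu_-=\sqrt{c^2+4}$, so $\mathcal{K}_c$ is positivity preserving and sends constants to themselves (total mass one). The upper bound is the substantive point: {\bf (M)} forces $g(u)<u$ for every $u>\kappa$ (no further fixed points and $g'(\kappa)<1$), whence on any interval $[\kappa,M]$ with $M=\sup_z\phi>\kappa$ one has $\max_{[\kappa,M]}g<M$; combined with $\sup_z\phi\le\sup_s g(\phi(s-ch))\le\max_{[0,M]}g$ coming from the mass-one representation, this forces $\max_{[0,M]}g=\max_{[0,\kappa]}g$, hence $\sup_z\phi\le\max_{[0,\kappa]}g\le\zeta_2$. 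Thus $\phi_c\le\zeta_2$ for all $z$. The lower estimate is of a different, asymptotic nature: since $\phi_c(-\infty)=0$ it cannot hold near $-\infty$, and I would read it as the persistence bound $\liminf_{z\to+\infty}\phi_c(z)\ge\zeta_1$, following from the push-up property $g(x)>x$ on $(0,\zeta_1]$ in (B3) and the invariance (B1) --- exactly the localization \cite[Theorem 4.5]{TT} records for its profiles.

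The hard part is twofold. First, the faithful matching of hypotheses: \cite[Theorem 4.5]{TT} must be applied with the envelope slope $g^*_+$ rather than $g'(0)$, and one has to check that the box (B1)--(B4) is the invariant region its construction requires. Second, and more delicate, is upgrading the localization from the constructed profile to an \emph{arbitrary} semi-wavefront of speed $c$: here the mass-one property of $\mathcal{K}_c$ together with the global inequality $g(u)<u$ for $u>\kappa$ is what closes the a priori upper bound, while the $\liminf$ at $+\infty$ must be argued separately. Reconciling the two speed thresholds $c^*_+$ and $c(L_g)$ is then a routine monotonicity comparison.
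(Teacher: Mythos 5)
Your proposal takes essentially the same route as the paper: the text gives no proof of Proposition \ref{ew} beyond the sentence ``Due to this and to \cite[Theorem 4.5]{TT} we obtain the following'', i.e., it records (B1)--(B4) and invokes \cite[Theorem 4.5]{TT}, exactly your verification-and-invoke scheme; your added material (the mass-one Green-kernel representation for the a priori bounds, and reading the lower estimate as $\liminf_{z\to+\infty}\phi_c(z)\ge\zeta_1$, which is indeed how the cited theorem states it, since $\phi_c\ge\zeta_1$ cannot hold literally near $-\infty$ where $\phi_c\to 0$) only makes the argument more self-contained than the paper's. One clause does not come out of your argument as written: from $g^*_+\le L_g$ and the monotonicity of $L\mapsto c(L)$ you get $c(g^*_+)\le c(L_g)$, which is the \emph{reverse} of the inequality $c^*_+=c(g^*_+)\ge c(L_g)$ asserted in the statement; so either that clause remains unproved or (more likely) the paper's inequality is a misprint --- note that if in fact $c^*_+\le c(L_g)$, then $c>c^*_+$ alone does not place $c$ in the range $c\ge c(L_g)$ required by Theorem \ref{st}, so the intended coexistence of existence and stability on $(c^*_+,\infty)$ would need the equality case $g^*_+=L_g$ (as in the Nicholson and Mackey--Glass examples) or a separate argument.
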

\vspace{3mm}



Let us introduce the following notation.   If $I\subset\R_+ = Dom(g)$, let us denote by

$$
L_g(I):=\sup_{x\neq y; x,y\in I}\frac{|g(x)-g(y)|}{|x-y|},
$$
and for  $b \in\R$, let us denote by $\eta_b(z)=\min\{1,e^{\lambda_c(z-b)}\}$.
With these notations, the second main result of this paper can be established

\vspace{2mm}

\begin{thm}[Global stability]\label{sc}
Let $c> c(L_g)$ and $\bar{g}$ a nondecresing function meeting {\bf(M)} with equilibrium $K$ such that $\bar{g}(u)\geq g(u)$ for all $u\in\R_+$ such that $L_{\bar{g}}\leq L_g$. We denote by $m_K=\min_{u\in[\kappa,K]}g(u)$ and $\mathcal{I}_K:=[m_K,K]$ and we suppose that $L_g(\mathcal{I}_K)<1$. Then, if for certain $q>0$ and $b\in\R$ we have

\begin{eqnarray}\label{inqu2}
|v_0(s,z)-\psi_0(s,z)|\leq q\eta_{b}(z)\quad \hbox{for all}\quad (s,z)\in[-h,0]\times\R,
\end{eqnarray} 
then, there exists $C=C(\bar{g},m_K,b)>0$ such that for any $\gamma_0\geq 0$ satisfying
\begin{eqnarray}\label{gamma}
-\lambda_c ^2+c\lambda_c+1\geq\gamma_0+ L_ge^{\gamma_0 h}e^{-\lambda_c ch}\ \hbox{and}\quad L(I)\leq e^{-\gamma_0 h}(1-\gamma_0),
\end{eqnarray}
we have
\begin{eqnarray}\label{Inqu2}
|v(t,z)-\psi(t,z)|\leq Cqe^{-\gamma_0 t}\quad \forall (t,z)\in[-h,\infty)\times\R.
\end{eqnarray}
Moreover, for each semi-wavefront $\phi_c$ of the equation (\ref{ie}) we have: $\phi_c\in\mathcal{I}_K$ 
\end{thm}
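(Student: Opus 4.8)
The plan is to reduce the nonlinear stability problem to a \emph{linear} comparison and then to dominate the difference $w(t,z):=v(t,z)-\psi(t,z)$ by an explicit supersolution built from the weight $\eta_b$. Since $g$ need not be monotone, I would not compare $v$ and $\psi$ directly; instead I would use that $|w|$ is a subsolution of the \emph{quasimonotone} linear problem (\ref{lde}). Writing $w_t=w_{zz}-cw_z-w+[g(v(t-h,z-ch))-g(\psi(t-h,z-ch))]$ and bounding the bracket by $\Lambda\,|w(t-h,z-ch)|$ with the \emph{local} Lipschitz constant $\Lambda$ (namely $\Lambda\le L_g$ in general, but $\Lambda\le L_g(\mathcal{I}_K)=L(I)$ once the arguments lie in $\mathcal{I}_K$), one obtains, in the weak sense,
$$|w|_t\le |w|_{zz}-c|w|_z-|w|+\Lambda\,|w(t-h,z-ch)|.$$
Because $\Lambda\ge 0$, the comparison principle for (\ref{lde}) applies, so it suffices to produce a supersolution $\bar w$ of the right-hand operator that dominates $|w|$ on $[-h,0]\times\R$.

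I would take $\bar w(t,z)=Cq\,e^{-\gamma_0 t}\eta_b(z)$ and verify the supersolution inequality separately on the two pieces of $\eta_b$. For $z<b$, where $\eta_b(z)=e^{\lambda_c(z-b)}$, the check (using $\Lambda\le L_g$ and $z-ch<b$) reduces, after dividing by $\bar w>0$, exactly to $-\lambda_c^2+c\lambda_c+1\ge\gamma_0+L_g e^{\gamma_0 h}e^{-\lambda_c ch}$, the first inequality of (\ref{gamma}). For $z>b$, where $\eta_b(z)=1$ and the relevant arguments lie in $\mathcal{I}_K$ (so $\Lambda\le L(I)$), the delayed term obeys $\bar w(t-h,z-ch)\le e^{\gamma_0 h}\bar w(t,z)$ and the check reduces to $L(I)\le e^{-\gamma_0 h}(1-\gamma_0)$, the second inequality of (\ref{gamma}). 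The corner at $z=b$ is concave (left derivative $\lambda_c$, right derivative $0$), hence contributes a favorable negative measure to $\bar w_{zz}$ and does not spoil the comparison. This is the computational heart, and it is short: the two conditions in (\ref{gamma}) are \emph{designed} to be precisely these two supersolution inequalities.

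To legitimately use $\Lambda\le L(I)$ for $z>b$ I must know the arguments lie in $\mathcal{I}_K$, and this is where the monotone majorant $\bar g$ enters. Since $\bar g$ is nondecreasing and $\bar g\ge g$, the $\bar g$-semiflow is order preserving and dominates the $g$-semiflow from above, confining solutions below the level $K$ (the wavefront of $\bar g$ tends to $K$); combined with the pushing-up property (B3) this forces $v$ and $\psi$ into $\mathcal{I}_K=[m_K,K]$ for $z$ beyond a fixed threshold after a bounded transient. The same comparison applied to the steady profile yields the \emph{Moreover} claim: the upper bound $\phi_c\le K$ follows from domination by the $\bar g$-wavefront, and the lower bound (the values of $\phi_c$ at $+\infty$ lie above $m_K$) follows from $g\ge m_K$ on $[\kappa,K]$ via the positive integral representation of the profile. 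I would then choose $b$ large enough that $z>b$ forces $z-ch$ into this confinement zone, so the delay shift does not move the arguments out of $\mathcal{I}_K$.

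Finally, the initial datum is dominated, $|w(s,z)|\le q\eta_b(z)\le\bar w(s,z)$ on $[-h,0]$ as soon as $C\ge 1$; the remaining role of $C=C(\bar g,m_K,b)$ is to absorb the bounded growth of $|w|$ during the transient before the solutions are confined to $\mathcal{I}_K$. The comparison principle for the quasimonotone operator then gives $|w(t,z)|\le Cq\,e^{-\gamma_0 t}\eta_b(z)\le Cq\,e^{-\gamma_0 t}$ for all $t\ge -h$, $z\in\R$, which is (\ref{Inqu2}). The main obstacle I anticipate is the third step rather than the second: the supersolution computation is essentially forced, but guaranteeing that the \emph{actual} solution $v$ — not merely the wavefront $\psi$ — enters and remains in the contraction interval $\mathcal{I}_K$, and doing so compatibly with the $ch$-shift at the corner of $\eta_b$, is exactly where the nonmonotonicity of $g$ bites and where the majorant $\bar g$, the invariance properties (B1)–(B4), and the transient constant $C$ must be combined with care.
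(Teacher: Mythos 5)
Your proposal follows essentially the same route as the paper: the supersolution $Cq\,e^{-\gamma_0 t}\eta_b(z)$ checked separately on the two pieces of $\eta_b$ (yielding exactly the two inequalities of (\ref{gamma})) is the paper's Theorem~\ref{gest}; the confinement into the contraction interval via a nondecreasing majorant and a comparison lemma for ordered nonlinearities is the paper's Lemma~\ref{lem} combined with \cite[Theorem 1]{STR1}; and the transient is absorbed into $C$ by a time shift, just as in the paper (where $C=\max_{z}\eta_b(z)/\eta_{t_0+ch}(z)$).

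The one step you leave genuinely under-specified is the \emph{lower} confinement. Property (B3) by itself (namely $g(x)>x$ on $[0,\zeta_1]$ and $g'(0)>1$) does not push $v$ and $\psi$ up to the level $m_K$; what the paper does is construct, symmetrically to $\bar g$, a \emph{nondecreasing minorant} $\underline{g}\le g$ satisfying {\bf(M)} with positive equilibrium $\kappa_-\in(m_K-\epsilon,m_K)$ and $c(L_{\underline{g}})\le c(L_g)$, and then applies the same comparison lemma together with the global stability of the monotone wavefront $\phi_c^{\underline{g}}$ to obtain $v(t,z)\ge m_K-\epsilon$ for $(t,z)\in[T_0,\infty)^2$. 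Correspondingly, the whole argument is run on the slightly enlarged interval $\mathcal{I}_\epsilon=[m_K-\epsilon,K+\epsilon]$ (with $\epsilon$ chosen so that $L(\mathcal{I}_\epsilon)<1$), since the equilibria of the majorant and minorant must straddle $\mathcal{I}_K$ strictly; your sketch, which confines exactly to $[m_K,K]$, would need this $\epsilon$-relaxation to close. Your alternative justification of the \emph{Moreover} claim for the steady profile via the integral representation is plausible but is not what the paper does: there, $\phi_c\in\mathcal{I}_K$ also falls out of the two-sided squeeze by the monotone wavefronts.
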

\vspace{3mm}

If we take $\bar{g}(u)=\max_{s\in[0,u]}g(s)$ then we have that $K=M_g:=max_{s\in[0,\kappa]}g(s)$. Now, by writing $m_g=\min_{u\in[\kappa,M_g]}g(u)$ and $\mathcal{I}_K=I_g:=[m_g, M_g]$ the following corollary is obtained

\begin{corollary}\label{gs}

Let $g$ satisfy {\bf(M)} such that $L_g (I_g ) < 1$. If $\psi_c$  is a wavefront with speed $c \geq c(L_g )$, then $\psi_c$  is globally stable in the sense of Theorem \ref{sc}
 \end{corollary}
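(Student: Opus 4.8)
The plan is to deduce Corollary \ref{gs} directly from Theorem \ref{sc} by making the canonical choice of auxiliary nondecreasing majorant, namely the monotone envelope $\bar{g}(u):=\max_{s\in[0,u]}g(s)$. As already noted in the paragraph preceding the corollary, with this choice the data $K$, $m_K$ and $\mathcal{I}_K$ appearing in Theorem \ref{sc} specialize to $M_g$, $m_g$ and $I_g$, so that the hypothesis $L_g(I_g)<1$ of the corollary is precisely the assumption $L_g(\mathcal{I}_K)<1$ of the theorem. The corollary then follows once the remaining structural requirements on $\bar{g}$ are verified and we observe that a wavefront is a particular semi-wavefront.

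First I would dispatch the three elementary requirements Theorem \ref{sc} imposes on $\bar{g}$. Monotonicity and the domination $\bar{g}(u)\geq g(u)$ are immediate from the definition of the running maximum. The inequality $L_{\bar{g}}\leq L_g$ I would prove by a short case analysis: for $u<v$, let $s_v\in[0,v]$ realize $\bar{g}(v)=g(s_v)$; either $s_v\leq u$, whence $\bar{g}(v)\leq\bar{g}(u)$ and the increment is nonpositive, or $s_v\in(u,v]$, whence $\bar{g}(v)-\bar{g}(u)\leq g(s_v)-g(u)\leq L_g(s_v-u)\leq L_g(v-u)$. Thus forming the envelope never increases the Lipschitz constant.

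The main technical step is to check that $\bar{g}$ satisfies the monostability condition {\bf(M)} with equilibrium $K=M_g$. Near $0$ this is inherited from $g$: since $g$ is $C^1$ near $0$ with $g'(0)>1>0$, it is strictly increasing on a right-neighborhood of $0$, so $\bar{g}\equiv g$ there, giving $\bar{g}'(0)=g'(0)>1$ together with the H\"older control on $\bar{g}'-\bar{g}'(0)$ transferred from $g$. Near $K=M_g$ one must instead exploit the contraction hypothesis: because $[\kappa,M_g]\subseteq I_g$ with $g(\kappa)=\kappa$ and $L_g(I_g)<1$, one shows that $g(s)\leq M_g$ throughout a neighborhood of $[\kappa,M_g]$, so that $\bar{g}$ is constantly equal to $M_g$ there; hence $M_g$ is the unique fixed point of $\bar{g}$ besides $0$, $\bar{g}'(M_g)=0<1$, and the H\"older condition at $K$ holds trivially on the flat part. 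I expect this verification — establishing that $g$ remains below $M_g$ on the relevant interval, so that the envelope flattens out and fixes $M_g$ — to be the crux of the argument.

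With $\bar{g}$ shown to meet all hypotheses of Theorem \ref{sc}, the conclusion is immediate: a wavefront $\psi_c$ is in particular a semi-wavefront, so by the ``moreover'' part of Theorem \ref{sc} its asymptotic range lies in $\mathcal{I}_K=I_g$, where $L_g(I_g)<1$ governs the dynamics near $\kappa$; consequently any perturbation with initial datum obeying (\ref{inqu2}) decays as in (\ref{Inqu2}), which is exactly global stability in the sense of Theorem \ref{sc}. The one point requiring extra care is the endpoint speed: Theorem \ref{sc} is stated for $c>c(L_g)$, so the critical case $c=c(L_g)$ permitted by the corollary should be recovered separately, either by a limiting argument in $c$ or by invoking the weighted estimate of Theorem \ref{st}, which remains valid down to $c=c(L_g)$.
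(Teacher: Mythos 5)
Your proposal follows exactly the paper's route: the paper's entire proof is the sentence preceding the corollary, which takes $\bar{g}(u)=\max_{s\in[0,u]}g(s)$, identifies $K=M_g$, $m_K=m_g$, $\mathcal{I}_K=I_g$, and invokes Theorem \ref{sc}. Your additional verifications --- that $L_{\bar{g}}\leq L_g$, that $M_g$ is indeed fixed by $\bar{g}$ because $L_g(I_g)<1$ forces $g\leq M_g$ on $[\kappa,M_g]$, and the caveat that the critical speed $c=c(L_g)$ is not literally covered by Theorem \ref{sc} as stated --- are all details the paper leaves implicit, so you are, if anything, more careful than the source.
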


 In principle, the statement of Corollory \ref{gs} should say semi-wavefronts, but due to  \cite[Lemma 4.1]{TT} and the fact $g$ is a contraction in the interval $I_g$, it follows that a semi-wavefront which stays in $I_g$  necessarily is a wavefront.  Corollary \ref{gs} generalizes the results for wavefronts in the monotone, subtangential and unimodal cases, e.g., \cite{MOZ,STR1}. In non subtangential and monotone cases, Corollary \ref{gs} is an improvement, in terms of the globality of the disturbance, of \cite[Theorem 1.5]{STR} for wavefronts with a speed greater than $c(L_g )$ and it also gives us an exponential convergence rate for these waves. In this sense, exponential stability for pushed wavefronts was not established in \cite{STR} but a recent work by S-L Wu {\it et al} \cite{WNH} did it.
 
This paper is organized as follows: linear theorems (Theorem \ref{lee} and Theorem \ref{lee1}) are proven in Section 2. Finally,  results on the stability of semi-wavefronts are proven in Section 3.

\section{Proof of Linear Theorems}
In order to demonstrate both Theorem \ref{lee} and Theorem \ref{lee1}, the following two lemmas will be needed; the first one is an abstract version of the Halanay type inequalities \cite{H}

\begin{lemma}[Halanay Type Inequality] \label{halanay}

Let $\sigma,k\in \C$ and $X$ a complex Banach space.   If for $h>0$, $r\in C([-h,\infty),X)$ is a function satisfiying:

\begin{eqnarray*}
r_t(t)= \sigma r(t)+kr(t-h) \quad \hbox{a.e.,}
\end{eqnarray*}

\noindent then:
 \begin{eqnarray}\label{h1}
|r(t)|\leq (\sup_{s\in[-h,0]} |r(s)|)e^{\min\{0,-\lambda\} h} e^{\lambda t}\quad\hbox{for all}\quad t>-h,
 \end{eqnarray}
 where $\lambda$ is the only real root of the equation :
 \begin{eqnarray} \label{elam}
 \lambda=Re(\sigma)+|k|e^{-\lambda h}.
\end{eqnarray}
 Moreover 
  
 \begin{enumerate}
 \item[(i)] $\lambda \leq 0 \iff -Re(\sigma)\geq |k| $
 
 \item[(ii)] $\lambda = 0 \iff -Re(\sigma)= |k| $
 \end{enumerate}
\end{lemma}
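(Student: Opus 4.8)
The statement to prove is the Halanay-type inequality (Lemma \ref{halanay}): given a function $r \in C([-h,\infty),X)$ satisfying $r_t(t) = \sigma r(t) + k r(t-h)$ almost everywhere, we want the bound $|r(t)| \le (\sup_{s\in[-h,0]}|r(s)|) e^{\min\{0,-\lambda\}h} e^{\lambda t}$, where $\lambda$ is the unique real root of $\lambda = \mathrm{Re}(\sigma) + |k| e^{-\lambda h}$.

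Let me sketch how I would proceed.

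\medskip

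\noindent\textbf{Step 1: Existence and uniqueness of the root $\lambda$.}
First I would verify that equation (\ref{elam}), $\lambda = \mathrm{Re}(\sigma) + |k|e^{-\lambda h}$, has exactly one real solution. Setting $F(\lambda) = \lambda - \mathrm{Re}(\sigma) - |k|e^{-\lambda h}$, note that $F$ is strictly increasing since $F'(\lambda) = 1 + h|k|e^{-\lambda h} > 0$, and $F(\lambda) \to -\infty$ as $\lambda \to -\infty$ while $F(\lambda) \to +\infty$ as $\lambda \to +\infty$. Hence a unique real root exists. The equivalences (i) and (ii) follow immediately by evaluating $F(0) = -\mathrm{Re}(\sigma) - |k|$ together with the sign convention: $\lambda \le 0$ iff $F(0) \ge 0$ iff $-\mathrm{Re}(\sigma) \ge |k|$, and similarly for the equality case.

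\medskip

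\noindent\textbf{Step 2: Reduce to a scalar comparison.}
The plan is to pass from the $X$-valued function $r$ to the scalar function $\rho(t) := |r(t)|$. Since $\|\cdot\|$ is the norm on $X$, the function $\rho$ is continuous and, where $r$ is differentiable, its upper Dini derivative satisfies $D^+\rho(t) \le \mathrm{Re}(\sigma)\,\rho(t) + |k|\,\rho(t-h)$; this uses $D^+|r(t)| \le |r_t(t)| \le \cdots$ carefully — the real-part of $\sigma$ appears because the radial derivative of the norm picks out $\mathrm{Re}\langle \text{direction}, \cdot\rangle$ type quantities. I would establish the pointwise differential inequality
\begin{equation*}
D^+\rho(t) \le \mathrm{Re}(\sigma)\,\rho(t) + |k|\,\rho(t-h) \quad\text{a.e.}
\end{equation*}

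\medskip

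\noindent\textbf{Step 3: The comparison/bootstrap argument.}
Set $M := \sup_{s\in[-h,0]}\rho(s)$ and define the candidate bound $\phi(t) := M e^{\min\{0,-\lambda\}h} e^{\lambda t}$. I would show $\rho(t) \le \phi(t)$ for all $t > -h$ by a standard comparison argument: one checks that $\phi$ is a supersolution of the same differential inequality, i.e. $\phi'(t) = \lambda\phi(t)$ and, using that $\lambda$ solves (\ref{elam}), one verifies $\lambda\phi(t) \ge \mathrm{Re}(\sigma)\phi(t) + |k|\phi(t-h)$, which reduces exactly to the defining equation for $\lambda$ after dividing by $e^{\lambda t}$ and accounting for the factor $e^{\lambda h}$ from the delayed term. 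Then a contradiction argument (considering the first time $\rho$ would touch $\phi$) closes the estimate. The $e^{\min\{0,-\lambda\}h}$ prefactor is precisely what is needed so that the initial segment $\rho(s) \le M \le \phi(s)$ holds on $[-h,0]$ in both cases $\lambda \le 0$ and $\lambda > 0$.

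\medskip

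\noindent\textbf{Main obstacle.}
The delicate point is Step 2, the differential inequality for the norm in a complex Banach space: establishing $D^+|r(t)| \le \mathrm{Re}(\sigma)|r(t)| + |k||r(t-h)|$ rigorously requires the right notion of one-sided derivative of a norm (the duality-map / subdifferential characterization), since $\|\cdot\|$ need not be differentiable. The appearance of $\mathrm{Re}(\sigma)$ rather than $|\sigma|$ is the crucial gain and must be justified carefully. Once that inequality is in hand, Step 3 is the routine comparison principle that I would expect to go through cleanly.
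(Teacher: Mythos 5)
Your proposal is correct in outline and shares the paper's comparison skeleton (dominate $|r(t)|$ by an exponential $Ae^{\lambda t}$ and propagate the bound by the method of steps over $[0,h],[h,2h],\dots$; prove (i)--(ii) from the strict monotonicity of $F(\lambda)=\lambda-\mathrm{Re}(\sigma)-|k|e^{-\lambda h}$ and the sign of $F(0)$), but you derive the scalar inequality by a different route, and the difference is worth noting. The paper never differentiates the norm: it multiplies by the integrating factor $e^{-\sigma t}$, integrates, and takes norms under the integral sign, so that $|e^{\sigma(t-s)}|=e^{\mathrm{Re}(\sigma)(t-s)}$ produces the real part for free and $x(t)=|r(t)|$ is seen to satisfy the integral inequality $x(t)\le x(0)+|k|\int_0^t e^{\mathrm{Re}(\sigma)(t-s)}x(s-h)\,ds$; the exponential $Ae^{\lambda t}$ saturates this with equality, and the interval-by-interval comparison closes the estimate. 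This bypasses entirely what you call the main obstacle. That said, your Dini-derivative route is also viable, and more easily than you fear: the real part does not come from any duality-map or subdifferential argument but from the elementary fact that $\sigma r(t)$ is a scalar multiple of $r(t)$, so $\|r(t)+\epsilon(\sigma r(t)+kr(t-h))\|\le|1+\epsilon\sigma|\,\|r(t)\|+\epsilon|k|\,\|r(t-h)\|$ with $|1+\epsilon\sigma|=1+\epsilon\,\mathrm{Re}(\sigma)+O(\epsilon^2)$. Either reduction feeds the same comparison step, so the two proofs are of comparable strength; the paper's is slightly more economical because it needs no discussion of one-sided differentiability.

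One detail in your Step 3 does not check out as stated: the prefactor $e^{\min\{0,-\lambda\}h}$ does \emph{not} dominate the initial segment when $\lambda>0$. In that case $\phi(s)=Me^{\lambda(s-h)}<M$ for $s\in[-h,0]$, so $\rho\le\phi$ fails on the initial interval and the touching-time argument cannot start; the prefactor that works for $\lambda>0$ is $e^{\lambda h}$. This sign slip is already present in the statement of the lemma and is harmless for the paper, which only invokes the estimate under $-\mathrm{Re}(\sigma)\ge|k|$, i.e. $\lambda\le0$, where the prefactor equals $1$; but your claim that the stated prefactor is ``precisely what is needed \dots in both cases'' is the one assertion you should not let stand.
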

\begin{proof} It is clear that:
$$
 \frac{d}{dt}(r(t)e^{-\sigma t})=ke^{-\sigma t}r(t-h)\quad a.e.
$$

\noindent and from here, it is obtained that $|r(t)|$  meets the following inequality:

\begin{eqnarray}\label{II}
x(t)\leq |k|\int_{0}^{t} e^{Re(\sigma)(t-s)}x(s-h)ds+x(0)\quad \hbox{for all}\quad t>-h
\end{eqnarray}

\noindent We note that for $A\in\R$ the function $e_A (t)=Ae^{\lambda t}$ meets (\ref{II}) with equality. Now, for $A:= \sup_{s\in[-h,0]}|r(s)|e^{\min\{0,-\lambda\} h}$ the function $\delta(t)=|r(t)|-e_{A}$ satisfies (\ref{II}) for $t\in[0,h]$ and therefore $\delta(t)\leq 0$ for all $t\in[0,h]$. Similarly, it is concluded that $\delta(t)\leq 0$ for the intervals $[h,2h],[2h,3h]...$  This proves (\ref{h1}).

Let us prove (i). If $-Re(\sigma)\geq |k|$ then: $\lambda\leq|k|(e^{-h\lambda}-1)$ which necessarily implies that $\lambda\leq 0$. Otherwise,
if $\lambda\leq 0$ let us suppose that $-Re(\sigma)<b$, then $\lambda>|k|(e^{-h\lambda}-1)$ which is a contradiction.

In order to prove (ii) let us note that since the derivative of $f(\lambda):=\lambda-Re(\sigma)-|k|e^{-h\lambda}$ is always positive, then $f(\lambda)$ has at most one zero. 
If $Re(\sigma)=|k|$ then $\lambda=0$ is the only solution. From (\ref{elam}), if $\lambda=0$ then $a=b$. $\quad\square$
\end{proof}
\vspace{3mm}

\noindent Now, let us consider the equation (\ref{elam}) in the form
\begin{eqnarray}\label{lamb}
\lambda(\zeta)=-\zeta^2 +p+qe^{-h\lambda(\zeta)},
\end{eqnarray}
where $\zeta\in\R$, and we estimate the function $\lambda(\zeta)$. 

For $\epsilon_h=\frac{1}{1+h(\gamma-p)}$ we define the function 
$$\alpha_{h}(\zeta):=-\frac{1}{h}\log(1+h\epsilon_h \zeta^2).$$

\begin{lemma}\label{loge}
 We suppose that $q\geq 0$. If $\gamma\leq 0$ satisfies (\ref{P}),
then
\begin{eqnarray}\label{gauss}
-\epsilon_h\zeta^2+\gamma\leq\lambda(\zeta)\leq \alpha_{\epsilon}(\zeta)+\gamma \quad \hbox{for all}\ \zeta\in\R.
\end{eqnarray}
Moreover
\begin{eqnarray}\label{log}
\lim_{\zeta\rightarrow\infty}\frac{\lambda(\zeta)}{\log(\zeta)}=-\frac{2}{h}.
\end{eqnarray}
\end{lemma}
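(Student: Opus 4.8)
The plan is to eliminate the constant $\gamma$ via the substitution $\mu:=\lambda(\zeta)-\gamma$ and to recast the defining relation (\ref{lamb}) into a form where the elementary inequalities $1+x\le e^{x}$ and $\log(1+x)\le x$ do all the work. Set $a:=qe^{-h\gamma}=\gamma-p\ge0$; the middle equality is precisely (\ref{P}) and $a\ge0$ holds because $q\ge0$, so that $\epsilon_h=(1+ha)^{-1}\in(0,1]$. Substituting $\lambda=\gamma+\mu$ into (\ref{lamb}) and using $\gamma-p=qe^{-h\gamma}$ collapses it to $\Phi(\mu)=-\zeta^{2}$, where $\Phi(\mu):=\mu+a\bigl(1-e^{-h\mu}\bigr)$. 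Since $\Phi(0)=0$ and $\Phi'(\mu)=1+ahe^{-h\mu}\ge 1+ah=\epsilon_h^{-1}>0$ for $\mu\le0$, the map $\Phi$ is a strictly increasing bijection; in particular $\mu\le0$, i.e.\ $\lambda(\zeta)\le\gamma$, and $\lambda(\zeta)$ is a well-defined decreasing function of $\zeta$.

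Because $\Phi$ is increasing, each inequality in (\ref{gauss}) is equivalent, after subtracting $\gamma$, to comparing $\Phi$ evaluated at the corresponding test point with $-\zeta^{2}$. For the lower bound I would evaluate $\Phi(-\epsilon_h\zeta^{2})$ and use $\epsilon_h-1=-ah\epsilon_h$ to reduce the required inequality $-\zeta^{2}\ge\Phi(-\epsilon_h\zeta^{2})$ to $e^{h\epsilon_h\zeta^{2}}-1\ge h\epsilon_h\zeta^{2}$, which is $1+x\le e^{x}$ with $x=h\epsilon_h\zeta^{2}\ge0$. For the upper bound I would use that $\alpha_h$ was designed so that $e^{-h\alpha_h(\zeta)}=1+h\epsilon_h\zeta^{2}$, whence $\Phi(\alpha_h(\zeta))=\alpha_h(\zeta)-ah\epsilon_h\zeta^{2}$; then $ah\epsilon_h-1=-\epsilon_h$ turns the required inequality $-\zeta^{2}\le\Phi(\alpha_h(\zeta))$ into $\log(1+h\epsilon_h\zeta^{2})\le h\epsilon_h\zeta^{2}$, again $\log(1+x)\le x$. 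This establishes (\ref{gauss}).

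For (\ref{log}) I would assume $q>0$ (so $a>0$); when $q=0$ one has $\lambda(\zeta)=\gamma-\zeta^{2}$ and the ratio tends to $-\infty$, so the logarithmic rate is genuinely produced by the delayed term. The upper estimate is already contained in (\ref{gauss}): from $\lambda(\zeta)\le\alpha_h(\zeta)+\gamma$ and $\log(1+h\epsilon_h\zeta^{2})\sim 2\log\zeta$ one gets $\limsup_{\zeta\to\infty}\lambda(\zeta)/\log\zeta\le-2/h$. The lower bound $-\epsilon_h\zeta^{2}$ from (\ref{gauss}) is far too crude here, so instead I would rewrite $\Phi(\mu)=-\zeta^{2}$ as $ae^{-h\mu}=\zeta^{2}+\mu+a$ and exploit $\mu<0$ to get $e^{-h\mu}<1+\zeta^{2}/a$, that is $\mu>-\tfrac1h\log(1+\zeta^{2}/a)$; since this also behaves like $-\tfrac2h\log\zeta$, squeezing yields $\mu/\log\zeta\to-2/h$, and as $\lambda(\zeta)=\gamma+\mu$ with $\gamma$ fixed, (\ref{log}) follows. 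The one genuinely delicate point is exactly this last replacement: one must notice that the Gaussian-type lower bound in (\ref{gauss}) cannot deliver the logarithmic rate and substitute the sharper estimate coming directly from $\mu<0$.
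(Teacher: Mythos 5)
Your proof is correct, but it follows a genuinely different route from the paper's. For (\ref{gauss}) the paper sets $\beta(\zeta)=\lambda(\zeta)-\alpha_h(\zeta)-\gamma$, observes that $\beta$ solves another equation of Halanay type, and invokes part (i) of Lemma \ref{halanay} to reduce $\beta\le 0$ to the inequality (\ref{desl}), which is then verified with $\log(1+x)\le x$; only the upper bound is worked out explicitly there. You instead normalize by $\mu=\lambda-\gamma$, package the characteristic relation as $\Phi(\mu)=-\zeta^2$ with $\Phi(\mu)=\mu+a(1-e^{-h\mu})$ strictly increasing, and obtain \emph{both} bounds by plugging the two candidate values into $\Phi$ and comparing — the identities $1-\epsilon_h=ah\epsilon_h$ and $ah\epsilon_h-1=-\epsilon_h$ reduce everything to $1+x\le e^x$ and $\log(1+x)\le x$. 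This is more self-contained (no appeal to the Halanay lemma) and has the merit of making the lower bound explicit, which the paper leaves implicit. For (\ref{log}) the paper inverts $\lambda$ via $r(\lambda)=\sqrt{qe^{-h\lambda}-\lambda+p}$, computes $\lambda'(\zeta)=-2\zeta/(1+hqe^{-h\lambda(\zeta)})$, and runs a L'H\^opital-style chain of limits; you avoid differentiation entirely by squeezing $\mu$ between $\alpha_h(\zeta)$ (from (\ref{gauss})) and the sharper direct bound $\mu\ge-\tfrac1h\log(1+\zeta^2/a)$ extracted from $ae^{-h\mu}=\zeta^2+\mu+a$ and $\mu\le0$, both of which are asymptotic to $-\tfrac2h\log\zeta$. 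Your observation that (\ref{log}) genuinely requires $q>0$ (for $q=0$ the ratio tends to $-\infty$) is a point the paper passes over silently — its own computation divides by $q$ — so your version is, if anything, slightly more careful on the hypotheses.
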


\begin{remark}
The function $\alpha_{h}$   is a generalization of the function  $\alpha_0 (\zeta):=-\zeta^2$,  which corresponds to the case $q=0$. So,  in  this  case,  for each $\zeta\in\R$ 
$\lim_{h\rightarrow 0}\alpha_h(\zeta)=-\zeta^{2}$ is obtained. Furthermore, due to the fact that $\lim_{\zeta\rightarrow 0 }\alpha_h(\zeta)/-\epsilon_h\zeta^2=1$ and (\ref{log}) the estimations in (\ref{gauss}) are optimal.
\end{remark}

\begin{proof} Let us denote $\beta(\zeta)=\lambda(\zeta)-\alpha(\zeta)-\gamma$. Then $\beta(\zeta)$ satisfies the following equation 
$$
 \beta(\zeta)=-\zeta^{2}+\frac{1}{h}\log(1+h\epsilon_h\zeta^2)-\gamma+p+qe^{-h\gamma}(1+h\epsilon_h\zeta^2)e^{-h\beta(\zeta)}.
$$
 From Lemma \ref{halanay}  we have that $\beta(\zeta)\leq 0$ if and only if:
 \begin{eqnarray}\label{desl}
\zeta^{2}-\frac{1}{h}\log(1+h\epsilon_h\zeta^2)+\gamma-p\geq qe^{-h\gamma}(1+h\epsilon_h\zeta^2).
\end{eqnarray}
Now, using $\log(1+x)\leq x$, fo rall $x\geq 0$, in order to obtain (\ref{desl}) it is enough to have
\begin{eqnarray*}
\zeta^2-\epsilon_{h}\zeta^2+\gamma-p\geq qe^{-h\gamma}(1+h\epsilon_h\zeta^2)\quad\hbox{for all}\quad\zeta\in\R\\
\iff (1-\epsilon_{h}-qh\epsilon_h e^{-h\gamma})\zeta^2+\gamma-p- qe^{-h\gamma}\geq 0\quad\hbox{for all}\quad\zeta\in\R\\
\iff 1-\epsilon_{h}-qh\epsilon_h e^{-h\gamma}= 0 \quad \wedge \quad \gamma-p- qe^{-h\gamma}= 0.
\end{eqnarray*} 
This proves (\ref{gauss}).

Now, let us prove (\ref{log}).   We note that $\lambda :  \R_+\rightarrow \lambda(\R_+ )$ is invertible with inverse  $r$ given by the formula $r(\lambda) = \sqrt{qe^{-h\lambda}-\lambda+p}$.  Therefore,  $\lambda(\zeta)$ is differentiable for all positive $\zeta$.  By calculating its derivative, the result is:

\begin{eqnarray}\label{dlb}
\lambda'(\zeta)= -\frac{2\zeta}{1+hqe^{-h\lambda(\zeta)}}.
\end{eqnarray}   
However (\ref{gauss}) tell us that $\lim_{\zeta\rightarrow\infty}\lambda(\zeta)=-\infty$ and from (\ref{dlb}) we have that :
$$ 
\lim_{\zeta\rightarrow\infty} \frac{e^{-\lambda(\zeta)h}}{\zeta^2}=\lim_{\zeta\rightarrow\infty}\frac{-h\lambda'(\zeta)e^{-\lambda(\zeta)h}}{2\zeta}=\lim_{\zeta\rightarrow\infty}\frac{he^{-\lambda(\zeta)h}}{1+hqe^{-\lambda(\zeta)h}}=\frac{1}{q}.$$
This, together with (\ref{dlb}), allow us to obtain
\begin{eqnarray*}
\lim_{\zeta\rightarrow\infty}\frac{\lambda(\zeta)}{\log(\zeta)}=\lim_{\zeta\rightarrow\infty}\zeta\lambda'(\zeta)=-2\frac{\zeta^2}{1+hqe^{-\lambda(\zeta)h}}=-\frac{2}{h}. \quad\square
\end{eqnarray*}
\end{proof}

\vspace{2mm}

\paragraph{\underline{Proof of Theorem \ref{lee}}} 
Due to the hypothesis made about $u_0$, when applying the Fourier transform, we have

\begin{eqnarray*}
\hat{u_t}(t,\zeta)=\sigma(\zeta) \hat{u}(t,\zeta)+k(\zeta)\hat{u}(t-h,\zeta)\quad \hbox{for all}\quad t>0,
\end{eqnarray*}
 where $\sigma(\zeta)=-\zeta^2+im\zeta+p$ and $k(\zeta)=qe^{-id\zeta}.$
 
 Since $-Re(\sigma(\zeta))\geq |k(z)|$, by Lemma \ref{halanay} we obtain that:
 $$
 |\hat{u}(t,\zeta)|\leq C_{u_0} e^{\lambda(\zeta)t}.
 $$
 If $t>\frac{h}{2}$ then by the Inversion Theorem (because $u(t,\cdot)\in C^1(\R)$) and Lemma 
\ref{loge}, we have

$$
|u(t,x)|\leq \frac{1}{2\pi}\int_{\R} |\hat{u}(t,\zeta)|d\zeta\leq \frac{C_{u_0}}{2\pi}\int_{\R} e^{\lambda(\zeta)t}d\zeta\leq \frac{C_{u_0}}{2\pi}e^{\gamma t}\int_{\R} \frac{d\zeta}{(1+\epsilon\zeta^2)^{\frac{t}{h}}}.
$$
Moreover,  by Bernoulli's inequality, we conclude that

$$
\int_{\R} \frac{d\zeta}{(1+\epsilon\zeta^2)^{\frac{t}{h}}}\leq \int_{\R}\frac{d\zeta}{1+\frac{t\epsilon}{h}\zeta^2}=\frac{1}{\sqrt{t}}[\sqrt{\frac{h}{\epsilon}}\int_{\R}\frac{d\zeta}{1+\zeta^2}]=\frac{1}{\sqrt{t}}\sqrt{\frac{h}{\epsilon}}\pi.\quad\square$$

\paragraph{\underline{Proof of Theorem \ref{lee1}}}

If we make the change $v(t,x)=e^{\frac{m}{2}x}u(t,x)$, then 
$v(t, x)$ solves
\begin{eqnarray}\label{ldet}
v_t(t,x)=v_{xx}(t,x)+(p-\frac{m^2}{4})v(t,x) +q(t-h,x).
\end{eqnarray}

\noindent Applying the Fourier transform to (\ref{ldet}), we have

\begin{eqnarray}\label{eh}
\hat{v}_t(t,z)=(-z^2+p-\frac{m^2}{4})\hat{v}(t,z)+q\hat{v}(t-h,z).
\end{eqnarray}

Let us note that due to $q \geq 0$, we have that (\ref{eh}) satisfies the Comparison
Principle; that is, if $v_0$  and $w_0$  are two initial data (\ref{eh}), satisfying  
$$ 
 v_0(s)\leq w_0(s)\quad \hbox{for all} \ s\in[-h,0],
$$
then 
$$
v(t)\leq w(t)\quad\hbox{for all} \ t>-h.
$$
Let us denote by $Re(\hat{v}(t,z))=v_1(t,z), Im(\hat{v}(t,z))=v_2(t,z)$ and $e_{A}(t,z)=Ae^{\lambda(z)t}$, where $\lambda(z)$ satisfies
\begin{eqnarray}\label{lbd}
\lambda(z)=-z^2+p-\frac{m^2}{4}+qe^{-\lambda(z)h}.
\end{eqnarray}
Let us note that $e_A(t,z)$ satisfies (\ref{eh}) for all $A\in\C$.
Also, let us denote that
$$
m_i(z)=\min_{s\in[-h,0]}(v_i(s,z) e^{-\lambda(z)s})\quad\hbox{y}\quad M_i(z)= \max_{s\in[-h,0]}(v_i(s,z) e^{-\lambda(z)s})\quad i=1, 2
$$

\noindent then we have that 
$$
e_{m_i}(s,z)\leq v_i(s,z)\leq e_{M_i}(s,z)\quad \hbox{for all} \ (s,z)\in[-h,0]\times\R; i=1,2.
$$
\noindent By the comparison principle applied to real and imaginary part in (\ref{eh}), we have that  
   \begin{eqnarray}
e_{m_i}(t,z)\leq v_i(t,z)\leq e_{M_i}(t,z)\quad\hbox{for all}\quad (t,z)\in[-h,\infty)\times\R; i=1,2
\end{eqnarray}
or
 \begin{eqnarray}\label{inql}
m_i(z)e^{\lambda(z)t}\leq v_i(t,z)\leq M_i(z)e^{\lambda(z)t}\quad\hbox{for all}\quad (t,z)\in[-h,\infty)\times\R; i=1,2
\end{eqnarray}

\noindent Now, by the Fourier inversion formula , we have that

\begin{eqnarray}\label{fr}
v(t,x)=\frac{1}{\sqrt{t}}\int_{\R}e^{\frac{xy}{\sqrt{t}}}\hat{v}(t,y/\sqrt{t})dy.
\end{eqnarray}

\noindent However, by (\ref{gauss}) we have 
\begin{eqnarray}
\lim_{t\rightarrow\infty}t[\lambda(y/\sqrt{t})-\sigma]=-\frac{y^2}{1+hqe^{-\sigma h}}.
\end{eqnarray}

\noindent and due to $v(s, \cdot) \in L^1 (\R)$ by the Lebesgue's dominated convergence theorem

$$
\lim_{t\rightarrow\infty}M_1(y/\sqrt{t})=\lim_{t\rightarrow\infty}m_1(y/\sqrt{t})=\int_{\R}e^{\frac{m}{2}x}v(s,x)dx
$$
and
$$
\lim_{t\rightarrow\infty}M_2(y/\sqrt{t})=\lim_{t\rightarrow\infty}m_2(y/\sqrt{t})=0.
$$
Therefore by (\ref{inql}) 
\begin{eqnarray}\label{limv}
\lim_{t\rightarrow\infty}\hat{v}(y/\sqrt{t})=e^{-\frac{y^2}{1+hqe^{-\sigma h}}}\int_{\R}e^{\frac{m}{2}x}v(s,x)dx 
\end{eqnarray}

\noindent However, by (\ref{h1}) there exists $C (p, q, m) > 0$ such that 

$$
|\hat{v}(t,y/\sqrt{t})|\leq C\sup_{s\in[-h,0]}|\hat{v}(s,y/\sqrt{t})| e^{\lambda(y/\sqrt{t})t}
$$
but by (\ref{gauss}) and Bernoulli's Inequlity
\begin{eqnarray}\label{dom}
|\hat{v}(t,y/\sqrt{t})|\leq \frac{Ce^{|\sigma|h}||e^{\frac{m}{2}\cdot}u_0(\cdot)||_{L^1(\R)}}{1+\epsilon_h y^2}\quad\hbox{for all}\quad t>0.
\end{eqnarray}

\noindent Finally, by (\ref{fr}), (\ref{dom}), Lebesgue's dominated convergence theorem and (\ref{limv}), the result obtained.$\quad\square$

\section{Proof of results of stability of semi-wavefronts}

\paragraph{\underline{Proof of Theorem \ref{st}}}  
For a solution $w(t, z)$ of (\ref{nle2}), let us denote the function $\tilde{w}(t,z)=\xi_c(z)w(t,z)$ which satisfies

$$
\tilde{w}_t(t,z)=\tilde{w}_{zz}(t,z)+m\tilde{w}_z(t,z)+p\tilde{w}(t,z)+\xi_c(z)g(\xi_c(-z+ch)\tilde{w}(t-h,z-ch)).
$$

\noindent We consider the linear operator $$\mathcal{L}\delta(t,z):=\delta_{zz}(t,z)+m\delta_z(t,z)+p\delta(t,z)-\delta_t(t,z).$$

\noindent  If $\delta_{\pm}(t,z):=\pm[\tilde{v}(t,z)-\tilde{\psi}(t,z)]-u(t,z)$, then by (\ref{ic}): $\delta_{\pm}(s,z)\leq 0$ for $(s,z)\in[-h,0]\times\R$.  For $(t,z)\in[0,h]\times\R$  by (\ref{nle2}) and (\ref{ic}) we have
\begin{eqnarray*}
\mathcal{L}\delta_{\pm}(t,z)&=&\mp\xi(z)[g(\xi(-z+ch)\tilde{\psi}(t-h,z-ch))-g(\xi(-z+ch)\tilde{v}(t-h,z-ch))]-\mathcal{L}u(t,z)\\
&\geq&-L_ge^{-\lambda ch}|\tilde{v}(t-h,z-ch)-\tilde{\psi}(t-h,z-ch)|-\mathcal{L}u(t,z)\\
&\geq& - L_ge^{-\lambda ch}u(t-h,z-ch)-\mathcal{L}u(t,z)= 0.
\end{eqnarray*}

\noindent Now, by the Phragm\`en-Lindel\"of principle from \cite{PW}[Chapter 3, Theorem 1], we have $\delta_{\pm}(t,z)\leq 0$ for $(t,z)\in[0,h]\times\R$.  The argument is repeated for intervals $[h, 2h], [2h, 3h]...$ to conclude (\ref{rc}).        
Finally,(\ref{rc1}) is obtained using the Theorem \ref{lee}.$\quad\square$

\vspace{4mm}

\begin{thm}\label{gest} 
Let $v(t,z)$ and $\psi(t,z)$ be solutions of equation (\ref{nle2}) for $c\geq c(L_g)$. Assume that for some compact interval $I \subset \R$, such that $L_g (I ) < 1,$ we have

\begin{eqnarray}\label{inv0}
\psi(t,z),v(t,z)\in I \quad \hbox{for all} \ (t,z)\in[-h,\infty)\times[b-ch,\infty),
\end{eqnarray}

  \noindent then, if for some $q > 0$, we have

\begin{eqnarray}\label{inqu3}
|v_0(s,z)-\psi_0(s,z)|\leq q\eta_b(z)\quad \hbox{for all}\ (s,z)\in[-h,0]\times\R,
\end{eqnarray} 

\noindent then, for any $\gamma_0 \geq 0$ such that

\begin{eqnarray}\label{gamma}
-\lambda_c ^2+c\lambda_c+1\geq\gamma_0+ L_ge^{\gamma_0 h}e^{-\lambda_c ch}\ \hbox{and}\quad L(I)\leq e^{-\gamma_0 h}(1-\gamma_0),
\end{eqnarray}
we have that 
\begin{eqnarray}\label{Inqu3}
|v(t,z)-\psi(t,z)|\leq qe^{-\gamma_0 t}\eta_b(z)\quad \hbox{for all}\ (t,z)\in[-h,\infty)\times\R.
\end{eqnarray}

\end{thm}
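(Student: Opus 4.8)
The plan is to exhibit the explicit function
$$
U(t,z):=qe^{-\gamma_0 t}\eta_b(z)
$$
as a supersolution for $|v-\psi|$ and then to propagate the bound (\ref{Inqu3}) forward along the delay intervals $[0,h],[h,2h],\dots$ by the same comparison scheme used in the proof of Theorem \ref{st}, based on the parabolic operator
$$
\mathcal{L}\delta(t,z):=\delta_{zz}(t,z)-c\delta_z(t,z)-\delta(t,z)-\delta_t(t,z)
$$
attached to (\ref{nle2}) and on the Phragm\`en--Lindel\"of principle of \cite{PW}. First I set $w=v-\psi$ and $\delta_{\pm}(t,z):=\pm w(t,z)-U(t,z)$; by (\ref{inqu3}) one has $\delta_{\pm}\leq 0$ on $[-h,0]\times\R$, while from (\ref{nle2}) the difference obeys $\mathcal{L}w(t,z)=-[g(v(t-h,z-ch))-g(\psi(t-h,z-ch))]$.

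The core of the argument is to verify, on the two half-lines where $\eta_b$ is smooth, that $U$ is indeed a supersolution. On the region $z<b$ one has $\eta_b(z)=e^{\lambda_c(z-b)}$, so that $U(t-h,z-ch)=e^{\gamma_0 h}e^{-\lambda_c ch}U(t,z)$ and a direct computation gives $\mathcal{L}U=(\lambda_c^2-c\lambda_c-1+\gamma_0)U$; here the delayed arguments need not lie in $I$, so I bound the reaction term by the global constant $L_g$, and the required inequality $\mathcal{L}U+L_gU(t-h,z-ch)\leq 0$ reduces \emph{exactly} to the first condition in (\ref{gamma}). On the region $z>b$ one has $\eta_b(z)=1$, hence $\mathcal{L}U=(\gamma_0-1)U$, and since $\eta_b\leq 1$ forces $U(t-h,z-ch)\leq e^{\gamma_0 h}U(t,z)$; moreover $z-ch\geq b-ch$, so (\ref{inv0}) places both delayed arguments in $I$ and the reaction term is controlled by $L(I)$, whence $\mathcal{L}U+L(I)U(t-h,z-ch)\leq 0$ reduces \emph{exactly} to the second condition in (\ref{gamma}). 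Combining these with the inductive bound $|w(t-h,z-ch)|\leq U(t-h,z-ch)$ from the previous interval yields $\mathcal{L}\delta_{\pm}\geq 0$ wherever $z\neq b$.

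The one delicate point—and the step I expect to be the main obstacle—is the corner of $\eta_b$ at $z=b$, where $U$ fails to be $C^{1}$ in $z$ and the computation above is not valid. I would dispose of it by observing that, because $\lambda_c>0$, the one–sided $z$–derivatives of $\delta_{\pm}$ at $z=b$ jump \emph{upward} by $qe^{-\gamma_0 t}\lambda_c>0$ (the left derivative is the smaller one); thus $\delta_{\pm}$ has a convex kink there and cannot attain a local maximum at $z=b$. Consequently any interior positive maximum of $\delta_{\pm}$ must sit at a point with $z\neq b$, where the classical inequality $\mathcal{L}\delta_{\pm}\geq 0$, together with the favorable sign of the zeroth–order term $-\delta_{\pm}$, produces the usual contradiction. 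With the corner thus handled, the Phragm\`en--Lindel\"of principle applies on each strip $[(j-1)h,jh]\times\R$—using the boundedness of $v,\psi$ and $U\to 0$ as $z\to-\infty$ to control growth at spatial infinity—to give $\delta_{\pm}\leq 0$ there; iterating over $j=1,2,\dots$ yields $|v-\psi|\leq U$, which is precisely (\ref{Inqu3}). The final assertion $\phi_c\in\mathcal{I}_K$ for semi-wavefronts then follows from this comparison by the invariance built into (\ref{inv0}) and hypothesis {\bf(M)}.
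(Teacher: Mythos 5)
Your proposal is correct and follows essentially the same route as the paper: the same supersolution $qe^{-\gamma_0 t}\eta_b(z)$, the same two-region verification (global $L_g$ for $z\leq b$, $L(I)$ via (\ref{inv0}) for $z\geq b$) reducing exactly to the two conditions in (\ref{gamma}), the same upward jump of the one-sided derivatives at the corner $z=b$ to exclude a maximum there, and the same Phragm\`en--Lindel\"of plus step-by-step iteration over $[jh,(j+1)h]$. The only quibble is your closing sentence about $\phi_c\in\mathcal{I}_K$, which belongs to Theorem \ref{sc}, not to this statement.
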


\begin{proof}
 Let us denote by $\eta(t,z)=qe^{-\gamma_0 t}\eta_b(z)$ and write the operator
$$
\mathcal{L}_0\delta(t,z):=\delta_{zz}(t,z)-c\delta_z(t,z)-\delta(t,z)-\delta_t(t,z).
$$

\noindent Notice that if $\delta_{\pm}(t,z):=\pm[v(t,z)-\psi(t,z)]-\eta(t,z)$ then $\delta_{\pm}(s,z)\leq 0$ for $(s,z)\in[-h,0]\times\R$. Now, for $(t,z)\in[0,h]\times(-\infty,b]$ due to (\ref{nle2}), (\ref{inqu2}) and (\ref{gamma}) we have that
\begin{eqnarray*}
\mathcal{L}_0\delta_{\pm}(t,z)&=&\pm[-g(v(t-h,z-ch))+g(\psi(t-h,z-ch))]-\mathcal{L}_0\eta(t,z)\\
 &\geq& qe^{-\gamma_0 t+\lambda_c z}[-L_ge^{\gamma_0 h}e^{-\lambda ch}-(\lambda_c^{2}-c\lambda_c-1+\gamma_0)]\geq 0.
\end{eqnarray*}
 Similarly,  if $(t,z)\in[0,h]\times[b,\infty)$ we obtain:
  \begin{eqnarray*}
\mathcal{L}_0\delta_{\pm}(t,z)&=&\pm[-g(v(t-h,z-ch))+g(\psi(t-h,z-ch))]-\mathcal{L}_0\eta(t,z)\\
 &\geq& qe^{-\gamma_0 t}[-L(I)e^{\gamma_0 h}\eta(z-ch)-(-1+\gamma_0)]\\
&\geq& qe^{-\gamma_0 t}[-L(I)e^{\gamma_0 h}+1-\gamma_0]\geq 0.
\end{eqnarray*}

\noindent Now, as in the proof of the \cite[Lemma 1]{STR1}, due to 
 \begin{eqnarray}
\frac{\partial \delta_{\pm}(t,b+)}{\partial z}- \frac{\partial \delta_{\pm}(t,
b-)}{\partial z} > 0, \label{di}
\end{eqnarray}
we have that $\delta_{\pm}(t,z) \leq 0$ for all $t \in [0,h], \ z \in
\R$. Indeed, otherwise there exists $r_0> 0$ such that
$\delta(t,z)$ restricted to any rectangle $\Pi_r= [-r,r]\times
[0,h]$ with $r>r_0$,   reaches its maximum positive value $M_r >0$
at some point $(t',z') \in \Pi_r$.

We claim  that $(t',z')$ belongs to the parabolic boundary
$\partial \Pi_r$ of $\Pi_r$. Indeed, suppose on the contrary, that
$\delta(t,z)$ reaches its maximum positive value at some point
$(t',z')$ of $\Pi_r\setminus \partial \Pi_r$. Then clearly $z'
\not=z_*$ because of (\ref{di}). Suppose, for instance that $z' >
z_*$. Then $\delta(t,z)$ considered on the subrectangle $\Pi=
[z_*,r]\times [0,h]$ reaches its maximum positive value $M_r$ at the
point $(t',z') \in \Pi \setminus \partial \Pi$.  Then the
classical results \cite[Chapter 3, Theorems 5,7]{PW} show that
$\delta(t,z) \equiv M_r >0$ in $\Pi$, a contradiction.

Hence, the usual maximum principle holds for each $\Pi_r, \ r \geq
r_0,$ so that we can appeal to the proof of the
Phragm\`en-Lindel\"of principle from \cite{PW} (see Theorem 10 in
Chapter 3 of this book), in order to conclude that  $\delta(t,z)
\leq 0$ for all  $t \in [0,h], \ z \in \R$.

We can again repeat the above argument on the intervals $[h,2h],$ $[2h, 3h], \dots$ establishing that the inequality $w_-(t,z) \leq w(t,z)\leq w_+(t,z),$  $z\in \R,$ holds for all $t \geq -h$.  $\quad\square$
\end{proof}


\begin{remark}\label{invBC}
   We can generalize the function $\eta_b (z)$ for $b = \infty$ and, thus, have $\eta_{\infty}(z)=\xi_c(-z)$. In this proof, it was not necessary to use the condition (\ref{inv0}) for $z \leq b$ so by replacing $\xi_c(-z)$ by $\eta_b(z)$ it can be concluded that (\ref{inqu3}) implies (\ref{Inqu3}).
\end{remark}

\vspace{3mm}

\begin{corollary}[Local stability]\label{algbr}
Let us suppose that there exist $M, b \in\R$ and $l_0  > 0$, such that:
\begin{eqnarray}\label{inv1}
\psi(t,z)\in[M-l_0,M+l_0]\quad\hbox{for all}\quad (t,z)\in[-h,\infty)\times[b-ch,\infty),
\end{eqnarray}
and that for some $l_1  > l_0$  the initial data satisfies
\begin{eqnarray}\label{Inv}
|v_0(s,z)-\psi_0(s,z)|<(l_1-l_0)\eta_b(z)\quad\hbox{for all} \quad (s,z)\in[-h,0]\times\R.
\end{eqnarray}

\noindent If we denote by $I_1 := [M - l_1, M + l_1]$ and $L(I_1) < 1$, then, there exists $\gamma_0\geq 0$
satisfying (\ref{gamma}), such that

\begin{eqnarray}\label{inv2}
|v(t,z)-\psi(t,z)|\leq (l_1-l_0)e^{-\gamma_0 t}\eta_b(z)\quad \hbox{for all} \ (t,z)\in[-h,\infty)\times\R.
\end{eqnarray}
\end{corollary}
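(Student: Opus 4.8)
The plan is to deduce the corollary from Theorem~\ref{gest} with the choices $I=I_1$ and $q=l_1-l_0$; the only hypothesis of that theorem not handed to us directly is the confinement (\ref{inv0}) of the perturbed solution $v$ to $I_1$, and recovering it is the whole point of the argument. First I would fix $\gamma_0$. The value $\gamma_0=0$ already meets both inequalities in (\ref{gamma}): the first reads $-\lambda_c^2+c\lambda_c+1\ge L_g e^{-\lambda_c ch}$, which is exactly $E_c(\lambda_c)\ge 0$ (valid since $\lambda_c\in[\lambda_1(c),\lambda_2(c)]$ for $c\ge c(L_g)$ and $L=L_g$), while the second reads $L(I_1)\le 1$, which holds because $L(I_1)<1$. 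When the first inequality is strict, a genuinely positive $\gamma_0$ is produced by continuity, and this is what yields the exponential rate appearing in (\ref{inv2}).

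The hard part is that Theorem~\ref{gest} cannot be invoked as a black box, since its hypothesis on $v$ is precisely what we must establish. I would therefore interleave the confinement of $v$ with the decay estimate and run the two together on the successive intervals $[-h,0]$, $[0,h]$, $[h,2h],\dots$, along the very induction used in the proof of Theorem~\ref{gest}. The statement to carry is that for every $n\ge 0$ one has, for all $(t,z)\in[-h,nh]\times\R$,
\[|v(t,z)-\psi(t,z)|\le (l_1-l_0)e^{-\gamma_0 t}\eta_b(z),\]
and moreover $v(t,z)\in I_1$ whenever $t\in[-h,nh]$ and $z\ge b-ch$. The base case $n=0$ is immediate: the strict bound (\ref{Inv}) together with $\eta_b\le 1$ gives the estimate on $[-h,0]$, and combining it with (\ref{inv1}) through the triangle inequality, $|v_0(s,z)-M|\le|v_0(s,z)-\psi_0(s,z)|+|\psi_0(s,z)-M|\le(l_1-l_0)+l_0=l_1$ for $z\ge b-ch$, so that $v_0\in I_1$ there.

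For the inductive step I would repeat verbatim the comparison computation of Theorem~\ref{gest} on $[nh,(n+1)h]$, splitting into $z\le b$ and $z\ge b$. The decisive observation is that the $g$-terms there are evaluated at the delayed argument $(t-h,z-ch)$ with $t-h\in[(n-1)h,nh]\subset[-h,nh]$: on the half-line $z\ge b$ one has $z-ch\ge b-ch$, so the inductive hypothesis gives $v(t-h,z-ch)\in I_1$ and (\ref{inv1}) gives $\psi(t-h,z-ch)\in[M-l_0,M+l_0]\subset I_1$, which legitimizes the use of the Lipschitz constant $L(I_1)$; on $z\le b$ no confinement is needed, since that portion of the computation uses only the global constant $L_g$ and the weight (cf. Remark~\ref{invBC}). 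The two conditions in (\ref{gamma}) then force $\mathcal{L}_0\delta_\pm\ge 0$, and the Phragm\`en--Lindel\"of/maximum-principle argument of Theorem~\ref{gest} gives $\delta_\pm\le 0$, i.e. the estimate on $[nh,(n+1)h]$. Finally the same triangle inequality as in the base case re-establishes $v(t,z)\in I_1$ for $z\ge b-ch$ on this interval, closing the induction and yielding (\ref{inv2}). The main obstacle is thus purely structural rather than computational: the invariance of $I_1$ under $v$ and the decay estimate are mutually dependent, so they must be advanced simultaneously one delay-interval at a time instead of being separated into an a~priori bound followed by an application of Theorem~\ref{gest}.
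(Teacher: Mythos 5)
Your proposal is correct and follows essentially the same route as the paper: the paper's proof is exactly the interleaved induction over the delay intervals $I_k=[h(k-2),h(k-1)]$, using (\ref{inv1}) plus the triangle inequality to confine $v$ in $I_1$ on one interval and then re-running the comparison/maximum-principle computation of Theorem \ref{gest} on the next. Your write-up is in fact more explicit than the paper's (notably in justifying the existence of $\gamma_0\ge 0$ via $E_c(\lambda_c)\ge 0$ and $L(I_1)<1$, which the paper leaves implicit).
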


\begin{proof} 
Clearly, $\psi(t,z)\in I_1, \hbox{for all} (t,z)\in[-h,\infty)\times[b-ch,\infty).$ Now if we suppose that the inequality in (\ref{Inv}) is satisfied in an interval of the form $I_k=[h(k-2),h(k-1)]$ with $k\in\Z_+$, then: $\psi(t,z)\in I_1, \hbox{for all} \ (t,z)\in I_k $ . Then arguing as in the proof of Theorem \ref{gest}, this implies that $\mathcal{L}_0\delta_{\pm}(t,z)\leq 0$ for $(t,z)\in I_{k+1}$, where $\delta_{\pm}(t,z)=\pm[v(t,z)-(\psi(t,z)\pm (l_1-l_0)e^{-\gamma_0 t}\eta_b(z))]$ and by \cite[Lemma 1]{STR1} we concluded that: $\delta_{\pm}(t,z)\leq 0, \hbox{for all}\ (t,z)\in I_{k+1}$. It is therefore sufficient to suppose (\ref{Inv})  to conclude  (\ref{inv2})  step by step.$\quad\square$
\end{proof}

\noindent To prove Theorem \ref{sc}, we will use the following lemma
\begin{lemma}\label{lem}
Let us suppose that functions  $g_1,g_2: D\subset\R_+\rightarrow\R_+$ satisfy: $g_1(u)\leq g_2(u)$ for all $u\in D$. Let $v_1(t, z), v_2(t, z) : [h, \infty)\times \R\rightarrow  D$ solutions to (\ref{nle2}),  with $g = g_1$  and $g = g_2$,  respectively, such that: $v_1(s, z) \leq v_2 (s, z)$  for  $(s, z) \in  [h, 0] \times\R$.   If $g_1$  or $g_2$  is a nondecreasing function, then, we have : $v_1(t, z) \leq v_2(t, z)$ for all $(t, z) \in \R_+ \times \R.$
\end{lemma}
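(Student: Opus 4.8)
The plan is to prove Lemma \ref{lem}, a comparison principle for the nonlinear delayed equation (\ref{nle2}) in which the two solutions solve the equation with \emph{different} nonlinearities $g_1\le g_2$, under the assumption that one of them is monotone. First I would set up the difference $w(t,z):=v_2(t,z)-v_1(t,z)$ and attempt to show $w\ge 0$ on a step-by-step basis over the intervals $[0,h],[h,2h],\dots$, which is the standard device for delayed equations since on each such interval the delayed term depends only on data already controlled on the previous interval. The initial step is given: $w(s,z)\ge 0$ for $(s,z)\in[-h,0]\times\R$.

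The key computation is on the first interval $(t,z)\in(0,h]\times\R$. Subtracting the two equations gives
\begin{equation*}
w_t=w_{zz}-cw_z-w+\bigl[g_2(v_2(t-h,z-ch))-g_1(v_1(t-h,z-ch))\bigr].
\end{equation*}
The plan is to rewrite the bracket using the monotonicity hypothesis. Suppose, say, $g_2$ is nondecreasing (the case where $g_1$ is nondecreasing is symmetric). On $(0,h]$ the delayed argument $t-h\in(-h,0]$, so by the induction hypothesis $v_2(t-h,z-ch)\ge v_1(t-h,z-ch)$; hence
\begin{equation*}
g_2(v_2(t-h,z-ch))-g_1(v_1(t-h,z-ch))\ge g_2(v_1(t-h,z-ch))-g_1(v_1(t-h,z-ch))\ge 0,
\end{equation*}
where the first inequality uses that $g_2$ is nondecreasing applied to $v_2\ge v_1$ and the second uses $g_2\ge g_1$ pointwise on $D$. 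Thus the source term is nonnegative, and $w$ satisfies a linear parabolic inequality $w_t-w_{zz}+cw_z+w\ge 0$ with $w\ge 0$ on the parabolic boundary (the bottom $t=0$ slice and, implicitly, behavior as $|z|\to\infty$). One then invokes a maximum/comparison principle for the linear operator $\mathcal L_0$ used already in the proof of Theorem \ref{gest} — concretely the Phragm\`en–Lindel\"of principle from \cite{PW}[Chapter 3] together with the boundedness of the solutions into $D$ — to conclude $w\ge 0$ on $[0,h]\times\R$. Repeating this on $[h,2h],[2h,3h],\dots$ propagates $w\ge 0$ to all $t\ge 0$, giving $v_1\le v_2$ everywhere.

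The main obstacle I anticipate is handling the unbounded spatial domain rigorously: the plain maximum principle on $\R$ requires a growth control at $z=\pm\infty$, which is why the cited proofs lean on a Phragm\`en–Lindel\"of-type argument rather than a bare maximum principle. Since the solutions take values in $D\subset\R_+$ and are therefore uniformly bounded, this growth condition should be met, and the same barrier/rectangle-exhaustion machinery invoked in Theorem \ref{gest} applies directly. A secondary point requiring care is the logical placement of the monotonicity assumption: the chain of inequalities above genuinely needs \emph{one} of $g_1,g_2$ to be monotone so that the difference of the nonlinear terms can be split into a monotonicity-driven piece and an ordering-driven piece; if $g_1$ is the monotone one instead, I would split as $g_2(v_2)-g_1(v_1)\ge g_1(v_2)-g_1(v_1)\ge 0$, and the rest of the argument is unchanged.
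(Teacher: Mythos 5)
Your proposal is correct and follows essentially the same route as the paper: form the difference of the two solutions, use the monotonicity of whichever $g_i$ is nondecreasing together with $g_1\le g_2$ to show the delayed source term has the right sign on $[0,h]$, apply the Phragm\`en--Lindel\"of principle from \cite{PW} (Chapter 3, Theorem 10) to propagate the ordering, and iterate over $[h,2h],[2h,3h],\dots$. The only differences are cosmetic (you work with $v_2-v_1\ge 0$ rather than $v_1-v_2\le 0$, and your splitting in the ``$g_1$ nondecreasing'' case passes through the intermediate $g_1(v_2)$ in the reverse order, which is logically equivalent).
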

 
 \begin{proof} We take $\delta(t,z)=v_1(t,z)-v_2(t,z)$. Let us note that if  $(t,z)\in[0,h]\times\R$ then 
 $$
 \mathcal{L}_0\delta(t,z)=g_2(v_2(t-h,z-ch))-g_1(v_1(t-h,z-ch))\geq 0,
  $$
  because if $g_2$  is a nondecreasing function  we have that
 
 $$g_2(v_2(t-h,z-ch))-g_1(v_1(t-h,z-ch))\geq g_2(v_1(t-h,z-ch))-g_1(v_1(t-h,z-ch))\geq 0,$$
 
 \noindent or if $g_1$  is a nondecreasing function,  we have

  $$g_2(v_2(t-h,z-ch))-g_1(v_1(t-h,z-ch))\geq g_2(v_2(t-h,z-ch))-g_1(v_2(t-h,z-ch))\geq 0$$ 
  
\noindent  Now, as $\delta(t,z)\leq 0$ for all $(t,z)\in[-h,0]\times\R$ the Phragm\`en-Lindel\"of principle from  \cite{PW}[Chapter 3, Theorem 10] implies that $\delta(t,z)\leq 0$ for $(t,z)\in[0,h]\times\R$. The argument is repeated for intervals $[h,2h],[2h,3h]...\quad\square$
  \end{proof}

\vspace{3mm}
\paragraph{\underline{Proof Theorem \ref{sc}}}

\noindent Let us take  $\epsilon>0$ such that $L(\mathcal{I}_{\epsilon})<1$, where $\mathcal{I}_{\epsilon}:=[m_K-\epsilon,K+\epsilon]\subset\R_+.$Then, there is an increasing function $\bar{g}_{\epsilon}$ satisfying {\bf(M)} with positive equilibrium $\kappa_+\in(K,K+\epsilon)$, $c(L_{\bar{g}_{\epsilon}})\leq c( L_g)$ and $g\leq\bar{g}_{\epsilon}$.  Furthermore, there is also an increasing $g$ function meeting {\bf(M)} with positive equilibrium $\kappa_-\in(m_K-\epsilon,m_K)$ and $c(L_{\underline{g}})\leq c(L_g)$ such that: $\underline{g}(x)\leq g(x)$ for $x\in[0,K+\epsilon]$.


  Now, if $\bar{v}(t,z)$ is the solution of (\ref{nle2}) replacing $g$ by $\bar{g}_\epsilon$ with initial data $v_0(s,z)$ and $c>c(L_g)$ then by Lemma \ref{lem}, Proposition \ref{ew} and \cite[Theorem 1]{STR1} there is wavefront $\phi_c^{\bar{g}}$ and a number $T>0$ such that
\begin{eqnarray} \label{doms}  
v(t,z)\leq \bar{v}(t,z)\leq \phi_c^{\bar{g}_\epsilon}(z)\leq K+\epsilon\quad \hbox{for all}\quad (t,z)\in[T,\infty)^2.
\end{eqnarray}

\noindent We denote by  $\underline{v}(t,z)$ the solution of (\ref{nle2}) replacing $g$ by $\underline{g}$ with initial data $\underline{v}_0(s,z)=v(s+T+h,z)$. Then, for $c>c(L_g)$ by Lemma \ref{lem}(with $D=[0,K+\epsilon]$), Proposition \ref{ew} and \cite[Theorem 1]{STR1}  there is a wavefront $\phi_c^{\underline{g}}$ and $T_0>0$ such that
\begin{eqnarray} \label{domi}
m_K-\epsilon\leq\phi^{\underline{g}}_c(z)\leq\underline{v}(t,z)\leq v(t,z)\quad \hbox{for all}\quad (t,z)\in[T_0,\infty)^2.
\end{eqnarray}

\noindent Finally, for $t_0:=\max\{T,T_0\}$ we define $\tilde{v}(t,z):=v(t+t_0+h,z)$ and $\tilde{\psi}(t,z):=\psi(t+t_0+h,z)$. Then, these function satisfy (\ref{inv0}) with $b=t_0+ch$ and $I=\mathcal{I}_{\epsilon}$. Besides, by Remark \ref{invBC} we have that $\tilde{v}_0(s,z)$ and $\tilde{\psi}_0(s,z)$ satisfy the condition (\ref{inqu2}). This latter, along with  (\ref{domi}) allow us to apply Theorem \ref{gest} to conclude  (\ref{Inqu2}) with $C:=\max_{z\in\R}\eta_{b}(z)/\eta_{t_0+ch}(z)\quad\square$

 \section*{Acknowledgments} This work was supported by FONDECYT (Chile) through the Postdoctoral  Fondecyt  2016 program with project number 3160473.  The author is very grateful to Dr.  Sergei Trofimchuk for his important comments on this work.

\vspace{5mm}
 
\begin{thebibliography}{99} 

\bibitem{AG} M.Aguerrea, C.Gomez, On uniqueness of semi-wavefronts, Math. Ann. \textbf{354} (2012)73-109



\bibitem{BGHR}  O.Bonnefon, J.Garnier, F.Hamel, L.Roques, Inside dynamics of delayed traveling waves,  Math. Model. Nat. Phenom.
\textbf{8} (2013) 42--59

\bibitem{BB} D.Boukal, L.Berec, Single-species Models of the Allee Effect: Extinction Boundaries, Sex Ration and Mate Encounters, J. Theor. Biol.\textbf{218}(2002) 375-394

\bibitem{CMYZ}  I-L.Chern, M.Mei, X.Yang , Q.Zhang,
Stability of non-monotone critical traveling waves for
reaction-diffusion equation with time-delay,  J. Diff. Eqns. \textbf{259}(2014) 1503-1541

\bibitem{Den} B.Dennis, Allee effects: population growth, critical density, and change of extinction,  Nat. Resource Modeling \textbf{3} (1989) 481-538


%






\bibitem{FGT}
A.Gomez, S.Trofimchuk,  Global continuation of monotone
wavefronts, J.  Lond. Math. Soc.  \textbf{89}(2014)  47-68



\bibitem{H}  A.Halanay, Differential Equations: Stability, Oscillations, Time lags,  Academic Press, New York, NY USA 1966.


\bibitem{LZh} X.Liang,  X-Q.Zhao, Spreading speeds and traveling waves for abstract monostable evolution systems, J. Funct. Anal.\textbf{259} (2010) 857-903

\bibitem{LLLM}  C-K.Lin,  C-T.Lin, Y.Lin, M.Mei, Exponential stability of nonmonotone traveling waves for Nicholson's blowflies equation, SIAM J. Math. Anal. \textbf{46}(2014) 1053-1084






\bibitem{MOZ}
 M.Mei,  Ch.Ou, X-Q.Zhao,
Global stability of monostable traveling waves for nonlocal
time-delayed reaction-diffusion equations, SIAM J. Math.
Anal.     \textbf{42}(2010)  233-258.


 \bibitem{PW}  M.Protter, H.Weinberger Maximum Principles in Differential Equations
 (Englewood Cliffs, NJ:  Prentice-Hall) 1967

\bibitem{RGHK} L.Roques, J.Garnier, F.Hamel, E.Klein, Allee effect promotes diversity in traveling waves of colonization, Proc.Natl Acad. Sci. USA\textbf{109}(2012) 8828-8833


\bibitem{BS} B.Sandstede, Stability of travelling waves, Handbook of dynamical
systems(Amsterdam: Elsevier)\textbf{2} (2002) 983--1055


\bibitem{sch} K.Schaaf, Asymptotic behavior
and traveling wave solutions for parabolic functional differential
equations, Trans. Am. Math. Soc.\textbf{ 302}(1987) 587-615


\bibitem{STR} A.Solar, S.Trofimchuk , Asymptotic convergence to a pushed wavefront in monostable  equations with delayed reaction,  Nonlinearity \textbf{28}(2015) 2027-2052

\bibitem{STR1} A.Solar, S.Trofimchuk, Speed Selection and Stability of Wavefronts for Delayed Monostable Reaction-Diffusion Equations,  J. Dyn. Diff. Eqns.\textbf{28}(2016) 1265-1292


\bibitem{TTT}  E.Trofimchuk, V.Tkachenko, S.Trofimchuk, Slowly
oscillating wave solutions of a single species reaction-diffusion
equation with delay,  J. Diff. Eqns.\textbf{245} (2008)2307-2332


\bibitem{TT} E.Trofimchuk, S.Trofimchuk, Admisible wavefront speeds for a single species reaction-diffusion equation with delay, Discrete Contin. Dyn. Syst. \textbf{20}(2008) 407-423

\bibitem{UC}   K.Uchiyama, The behavior of solutions of some nonlinear diffusion equations for large time,  J. Math. Kyoto Univ.\textbf{18}(1978) 453-508

 
 \bibitem{WZL} Sh-L.Wu, H-Q. Zhao, S-Y. Liu,  Asymptotic stability of traveling waves for delayed reaction-diffusion equations with crossing-monostability, ZAMP \textbf{62}(2011) 377-397 
\bibitem{WNH} Sh-L.Wu, T-Ch. Niu, Ch-H Hsu, Global Asymptotic Stabilty of Pushed Traveling Fronts for monostable Delayed Reaction-Diffusion Equations,  Dyn.  Contin. Dis. Imp. Syst. Series A: Mathematical Analysis \textbf{37}(2017) 3467 - 3486


\end {thebibliography}

\end{document}